\numberwithin{equation}{section}
\newtheorem{theorem}{Theorem}[section]
\newtheorem*{theorem*}{Theorem}
\newtheorem{lemma}[theorem]{Lemma}
\newtheorem{proposition}[theorem]{Proposition}
\newtheorem{corollary}[theorem]{Corollary}
\theoremstyle{definition}
\newtheorem{definition}[theorem]{Definition}
\newtheorem{remark}[theorem]{Remark}
\newtheorem{example}[theorem]{Example}
\newcommand{\N}{\mathbb{N}}
\newcommand{\Z}{\mathbb{Z}}
\newcommand{\Q}{\mathbb{Q}}
\newcommand{\R}{\mathbb{R}}
\newcommand{\C}{\mathbb{C}}
\newcommand{\h}{\mathfrak{h}}
\newcommand{\doubledecker}[2]{{}^{#1}_{#2}}
\title{Regularization of elliptic multiple zeta values}
\author{Taichi Katayama}
\address{Graduate School of Mathematics, Nagoya University,
Furo-cho, Chikusa-ku, Nagoya, 464-8602, Japan}
\email{taichi.katayama.e6@math.nagoya-u.ac.jp}
\date{April 21, 2025.}
\begin{document}
\maketitle

\begin{abstract}
In this paper, we show that regularized elliptic multiple zeta values are given by polynomials in elliptic multiple zeta values with admissible indices and special ones whose indices consist of $0$ and $1$. 
\end{abstract}

\tableofcontents

\setcounter{section}{-1}
\section{Introduction}
Multiple zeta values are real numbers defined by the series:
\[
\zeta(k_1,\ldots,k_r):=\sum_{0<m_1<\cdots<m_r}\frac{1}{m_1^{k_1}\ldots m_r^{k_r}}\;\;(k_1,\ldots,k_r\in\mathbb{Z}_{\geq 1}, k_r\geq2).
\]
They have been actively studied since the late 1990s and have found a connection in various fields such as knot theory \cite{LM}, motives \cite{DG}, and mathematical physics \cite{BK}. \par
Elliptic multiple zeta values were introduced in \cite{E2}, defined by iterated integrals over the family of punctured elliptic curves ${\left(\C/(\Z+\Z\tau)\setminus\{0\}\right)}_{\tau\in\h}$. These values are given by
\[
I^A(k_1,\ldots,k_r;\tau):=\int_{0<z_1<\cdots<z_r<1}f_\tau^{(k_1)}(z_1)dz_1\cdots f_\tau^{(k_r)}(z_r)dz_r
\]
where $\h$ denotes the upper half-plane $\{z\in\C\mid\Im(z)>0\}$, and $f_\tau^{(n)}(z)$ represents a specific type of function related to Eisenstein-Kronecker series: 
\begin{align*}
F_{\tau}\left(\doubledecker{\,\alpha\,}{\,z\,}\right)=\sum_{n\geq0}f_\tau^{(n)}(z)\alpha^{n-1}:=\frac{\theta_{\tau}(z+\alpha)\theta_{\tau}'(0)}{\theta_{\tau}(z)\theta_{\tau}(\alpha)}.
\end{align*}
The iterated integral converges when $k_1 \neq 1$ and $k_r \neq 1$. Such a tuple $(k_1,\ldots,k_r)$ is called an admissible index. In the case when $k_1=1$ or $k_r=1$, a technique called regularization, described later (cf. Definition 1.26), is used to define elliptic multiple zeta values $I^A(k_1,\ldots,k_r;\tau)$ for all indices $(k_1,\ldots,k_r)\in\N_0^r$ where $\N_0$ denotes the set of all non-negative integers.
\par
A geometric aspect of elliptic multiple zeta values arises from their interpretion in terms of the monodromy of the Knizhnik-Zamolodchikov-Bernard (KZB) connection. The KZB connection appeared as a higher-genus analogue of the KZ connection in the study of the Wess-Zumino-Witten model on elliptic curves in conformal field theory \cite{B1,B2}. Subsequently, it was reformulated for genus 1 cases in \cite{CEE}, where non-commutative formal power series called elliptic associator was introduced as a pair of connection matrices along two types of loops on $\left(\C/(\Z+\Z\tau)\right)\setminus\{0\}$. Just as in the genus 0 case, it was demonstrated in \cite{E2} that the elliptic associator serves as the generating function for elliptic multiple zeta values.
\par
In genus 0 case, the KZ associator was introduced as the connection matrix of the KZ connection along the real line segment from 0 to 1, and coefficients of the KZ associator are expressed in terms of so-called regularized iterated integrals. In \cite{LM}, it was demonstrated that the coefficients of the KZ associator can be written solely in terms of iterated integrals that converge in the usual sense. The main result of this paper is to establish a similar result for the elliptic associator.
To state the main result, we introduce the following notation. Let $\mathcal{E}^A\mathcal{Z}$ denote the $\mathbb{Q}$-linear space spanned by the regularized elliptic multiple zeta values:
\[
\mathcal{E}^A\mathcal{Z}:=\langle I^A(k_1,\ldots,k_r;\tau)\mid r,k_1,\ldots,k_r\in\N_0\rangle_\Q.
\]
Similarly, let $\mathcal{E}^A\mathcal{Z}_{\mathrm{adm}}$ denote the $\mathbb{Q}$-linear subspace spanned by the elliptic multiple zeta values with admissible indices:
\begin{align*}
\mathcal{E}^A\mathcal{Z}_{\mathrm{adm}}&:=\langle I^A(k_1,\ldots,k_r;\tau)\mid r,k_1,\ldots,k_r\in\N_0,k_1,k_r\neq1\rangle_\Q.
\end{align*}
One checks that $\mathcal{E}^A\mathcal{Z}$ and $\mathcal{E}^A\mathcal{Z}_{\mathrm{adm}}$ form $\mathbb{Q}$-algebras under the shuffle product of iterated integrals. 
\vspace{2mm}\\
\textbf{Theorem\;\ref{main}}\textbf{.}
\textit{Regularized elliptic multiple zeta values are expressed as polynomial combinations of elliptic multiple zeta values with admissible indices and some special elliptic multiple zeta values. More precisely, as $\mathbb{Q}$-algebras, we have the following equality:}
\[
\mathcal{E}^A\mathcal{Z}=\mathcal{E}^A\mathcal{Z}_{\mathrm{adm}}[I^A(k_1,\ldots,k_r;\tau)\mid r\in\N_0,k_1,\ldots,k_r\in\{0,1\}].
\]
\par
We established this theorem by extending the special case of Fay relations for elliptic multiple zeta values, as demonstrated in \cite{M1}, to a more general one which is introduced in Theorem \ref{emzv-fay}.
\section{Preliminaries}
In this section, we recall the elliptic multiple zeta values introduced in \cite{E2}. We recall the definition of elliptic multiple zeta values in §1.1 and review some of their properties in§1.2.
\subsection{Definition of elliptic multiple zeta values}
\begin{definition}[{cf. \cite{Z}}]
Fix $\tau\in{\h}$. The \textbf{Eisenstein-Kronecker series} is the 2 variable complex function $F_{\tau}\left(\doubledecker{\,\alpha\,}{\,z\,}\right)\;(\alpha,z\in\C)$ defined by
\[
F_{\tau}\left(\doubledecker{\,\alpha\,}{\,z\,}\right):=\frac{\theta_{\tau}(z+\alpha)\theta_{\tau}'(0)}{\theta_{\tau}(z)\theta_{\tau}(\alpha)}
\]
where $\theta_\tau$ is the odd Jacobi theta function given by
\[
\theta_{\tau}(z):=\sum_{n\in\Z}(-1)^nq^{\frac{1}{2}(n+\frac{1}{2})^2}w^{n+\frac{1}{2}} \hspace{7pt} (q:=e^{2\pi i\tau},w:=e^{2\pi iz}). 
\]
\end{definition}
The Eisenstein-Kronecker series possesses the following properties.
\begin{proposition}[{cf. \cite{Z}}]\label{prop:fay}
\begin{enumerate}
\renewcommand{\labelenumi}{(\roman{enumi})}
\item $F_\tau\left(\doubledecker{\,\alpha\,}{\,z\,}\right)=-F_\tau\left(\doubledecker{-\alpha}{-z}\right)$.
\item $F_{\tau}\left(\doubledecker{\,\alpha\,}{\,z\,}\right)$ is a holomorphic function on $\C^2\setminus\left((\Lambda_\tau\times\C)\cup(\C\times\Lambda_\tau)\right)$, where $\Lambda_\tau:=\Z+\Z\tau$. It has simple poles at $z=a+b\tau$ with residue $e^{-2\pi ib\alpha}$ and at $\alpha=a+b\tau$ with residue $e^{-2\pi ibz}$, for $a,b\in\mathbb{Z}$.
\item $F_{\tau}\left(\doubledecker{\;\;\alpha}{z+1}\right)=F_{\tau}\left(\doubledecker{\,\alpha\,}{\,z\,}\right)$ and $F_{\tau}\left(\doubledecker{\;\;\alpha}{z+\tau}\right)=e^{-2\pi i\alpha}F_{\tau}\left(\doubledecker{\,\alpha\,}{\,z\,}\right)$.
\item $F_\tau\left(\doubledecker{\,\alpha\,}{\,z\,}\right)$ satisfies the \textbf{Fay identity} for Eisenstein-Kronecker series:
\[
F_{\tau}\left(\doubledecker{\,\alpha_1\,}{\,z_1\,}\right)F_{\tau}\left(\doubledecker{\,\alpha_2\,}{\,z_2\,}\right)
    =F_{\tau}\left(\doubledecker{\alpha_1+\alpha_2}{\;\;\;z_1}\right)F_{\tau}\left(\doubledecker{\;\;\alpha_2}{z_2-z_1}\right)+F_{\tau}\left(\doubledecker{\alpha_1+\alpha_2}{\;\;\;z_2}\right)F_{\tau}\left(\doubledecker{\;\;\alpha_1}{z_1-z_2}\right).
\]
\end{enumerate}
\end{proposition}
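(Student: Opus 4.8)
The plan is to deduce all four statements from three elementary properties of the odd Jacobi theta function $\theta_\tau$, read off directly from its $q$-series: that $\theta_\tau$ is odd ($\theta_\tau(-z)=-\theta_\tau(z)$), that it satisfies the quasi-periodicity relations $\theta_\tau(z+1)=-\theta_\tau(z)$ and $\theta_\tau(z+\tau)=-e^{-\pi i\tau-2\pi iz}\theta_\tau(z)$, and that it is entire with simple zeros exactly on $\Lambda_\tau$ (so in particular $\theta_\tau'(0)\neq0$). The oddness follows by reindexing $n\mapsto-n-1$; the relation $\theta_\tau(z+1)=-\theta_\tau(z)$ is immediate because the half-integer exponent gives $w^{n+1/2}\mapsto e^{\pi i}w^{n+1/2}=-w^{n+1/2}$; and $\theta_\tau(z+\tau)=-e^{-\pi i\tau-2\pi iz}\theta_\tau(z)$ follows by reindexing $n\mapsto n+1$ after completing the square $\tfrac12(n+\tfrac12)^2+(n+\tfrac12)=\tfrac12(n+\tfrac32)^2-\tfrac12$. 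Iterating the two multiplier relations yields the general transformation law $\theta_\tau(z+a+b\tau)=(-1)^{a+b}e^{-\pi ib^2\tau-2\pi ibz}\theta_\tau(z)$ for $a,b\in\Z$, which is the engine for the residue and periodicity computations; the simple-zero statement is the standard count of one zero per fundamental domain.

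Granting these, (i) and (iii) are immediate. For (i) I would substitute $\theta_\tau(-z-\alpha)=-\theta_\tau(z+\alpha)$, $\theta_\tau(-z)=-\theta_\tau(z)$, $\theta_\tau(-\alpha)=-\theta_\tau(\alpha)$ into the definition; the three sign changes collapse to a single overall $-1$. For (iii), under $z\mapsto z+1$ the factors $-1$ from numerator and denominator cancel, and under $z\mapsto z+\tau$ the two multipliers $-e^{-\pi i\tau-2\pi i(z+\alpha)}$ and $-e^{-\pi i\tau-2\pi iz}$ leave exactly $e^{-2\pi i\alpha}$. I would also record the evident symmetry $F_\tau\left(\doubledecker{\,\alpha\,}{\,z\,}\right)=F_\tau\left(\doubledecker{\,z\,}{\,\alpha\,}\right)$, since it lets me transfer statements about the $z$-poles to the $\alpha$-poles for free.

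For (ii), holomorphy away from $(\Lambda_\tau\times\C)\cup(\C\times\Lambda_\tau)$ is clear, as $\theta_\tau$ is entire and the only singularities arise from the zeros of $\theta_\tau(z)\theta_\tau(\alpha)$ in the denominator. To compute the residue at $z=a+b\tau$ (with $\alpha\notin\Lambda_\tau$) I would expand via the transformation law: near this point $\theta_\tau(z)\approx(-1)^{a+b}e^{-\pi ib^2\tau}\theta_\tau'(0)\,(z-a-b\tau)$, while $\theta_\tau(z+\alpha)=(-1)^{a+b}e^{-\pi ib^2\tau-2\pi ib\alpha}\theta_\tau(\alpha)$, so the common prefactor $(-1)^{a+b}e^{-\pi ib^2\tau}$ together with the $\theta_\tau(\alpha)$ and $\theta_\tau'(0)$ factors all cancel, leaving residue $e^{-2\pi ib\alpha}$. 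The residue $e^{-2\pi ibz}$ at $\alpha=a+b\tau$ then follows by the $z\leftrightarrow\alpha$ symmetry noted above.

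The hard part will be the Fay identity (iv), which I would prove by a Liouville-type argument in one variable. Fixing $z_2,\alpha_1,\alpha_2$ in generic position, let $\Phi(z_1)$ denote the left side minus the right side. Using (iii) term by term I would check that $\Phi(z_1+1)=\Phi(z_1)$ and $\Phi(z_1+\tau)=e^{-2\pi i\alpha_1}\Phi(z_1)$; the point is that all three products transform with the single common multiplier $e^{-2\pi i\alpha_1}$, the factors $e^{+2\pi i\alpha_2}$ and $e^{-2\pi i(\alpha_1+\alpha_2)}$ from the arguments $z_2-z_1$ and $z_1$ conspiring to match. By (ii) the only possible $z_1$-poles lie on $\Lambda_\tau$ and on $z_2+\Lambda_\tau$, so by quasi-periodicity it suffices to check $z_1=0$ and $z_1=z_2$; at each I would compute the residues of the three terms, using that each $F_\tau$ has residue $1$ at the zero of its argument and that the argument $z_2-z_1$ contributes the chain-rule sign $-1$, and verify they cancel. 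Hence $\Phi$ is entire with period $1$ and multiplier $e^{-2\pi i\alpha_1}$ under $\tau$; expanding $\Phi=\sum_n c_n e^{2\pi inz_1}$ forces $c_n(q^n-e^{-2\pi i\alpha_1})=0$, which for generic $\alpha_1$ gives $\Phi\equiv0$, and analytic continuation removes the genericity. The delicate bookkeeping sits in the $\tau$-multiplier and the two residue cancellations, so I would make those computations the core of the write-up.
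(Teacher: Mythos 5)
The paper does not prove this proposition at all: it is stated with the citation ``cf.~\cite{Z}'' and all four parts are treated as known facts about the Eisenstein--Kronecker series, so there is no in-paper argument to compare yours against. Judged on its own terms, your proposal is correct and is the standard self-contained route. The three theta inputs are derived correctly from the $q$-series (oddness via $n\mapsto -n-1$, the two multipliers, and the simple zeros exactly on $\Lambda_\tau$), and the iterated law $\theta_\tau(z+a+b\tau)=(-1)^{a+b}e^{-\pi ib^2\tau-2\pi ibz}\theta_\tau(z)$ produces exactly the cancellation you describe, giving residue $e^{-2\pi ib\alpha}$ at $z=a+b\tau$; the $z\leftrightarrow\alpha$ symmetry of $F_\tau$ then handles the $\alpha$-poles. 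For (iv), your bookkeeping is right: all three products acquire the common multiplier $e^{-2\pi i\alpha_1}$ under $z_1\mapsto z_1+\tau$ (for the middle term because $e^{-2\pi i(\alpha_1+\alpha_2)}\,e^{+2\pi i\alpha_2}=e^{-2\pi i\alpha_1}$), and the simple-pole residues cancel pairwise at $z_1=0$ (first term against second) and at $z_1=z_2$ (second against third, with the chain-rule sign $-1$ on $z_2-z_1$). Two points deserve to be made explicit in a final write-up, though neither is a gap: ``generic $\alpha_1$'' in the Fourier step means precisely $\alpha_1\notin\Lambda_\tau$, since $c_n(q^n-e^{-2\pi i\alpha_1})=0$ forces $c_n=0$ exactly when $e^{-2\pi i\alpha_1}\neq q^n$ for all $n\in\Z$, i.e.\ $\alpha_1\notin\Z+\Z\tau$; and checking the residues only at the representatives $z_1=0$ and $z_1=z_2$ suffices because the quasi-periodicity of $\Phi$ transports poles to poles with nonzero multipliers, so all residues on $\Lambda_\tau\cup(z_2+\Lambda_\tau)$ vanish with them. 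With those clarifications your Liouville-type argument is complete.
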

By Proposition \ref{prop:fay} (ii), we put the Laurent expansion of $F_{\tau}\left(\doubledecker{\,\alpha\,}{\,z\,}\right)$ at $\alpha=0$ as follows:
\begin{align*}
F_{\tau}\left(\doubledecker{\,\alpha\,}{\,z\,}\right)=:\sum_{n\geq 0}f_{\tau}^{(n)}(z)\alpha^{n-1}.
\end{align*}
Each $f_n(z)$ is a meromorphic function, and for example, when $n = 0, 1$, they are given by
\[
f_\tau^{(0)}(z)=1, f_\tau^{(1)}(z)=\pi\cot(\pi z)+4\pi\sum_{k,l\geq1}\sin(2\pi kz)q^{kl}.
\]
Since the residue of $F_{\tau}\left(\doubledecker{\,\alpha\,}{\,z\,}\right)$ at $z=a\;(a\in\mathbb{Z})$ is $1$, which is independent of $\alpha$, $f_{\tau}^{(n)}(z)$ has a simple pole at $z=a\;(a\in\mathbb{Z})$ for only $n=1$.
\begin{lemma}
When $k_1 \neq 1$ and $k_r \neq 1$, the iterated integral \\$\int_{0<z_1<\cdots<z_r<1}f_\tau^{(k_1)}(z_1)dz_1\cdots f_\tau^{(k_r)}(z_r)dz_r$ converges.
\end{lemma}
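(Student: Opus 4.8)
The plan is to prove absolute convergence, i.e. to bound $\int_{0<z_1<\cdots<z_r<1}\prod_{i=1}^r|f_\tau^{(k_i)}(z_i)|\,dz_1\cdots dz_r$. The only obstruction comes from the singularities of the integrand, and since each $f_\tau^{(k_i)}(z_i)$ depends on a single variable, there are no ``collision'' singularities along the diagonals $z_i=z_{i+1}$: the integrand can blow up only where some $z_i$ reaches a pole of $f_\tau^{(k_i)}$. As recalled above, $f_\tau^{(n)}$ has a pole on the lattice exactly when $n=1$, and that pole is simple. On the real segment the relevant lattice points are the integers $0$ and $1$, so on the open interval $(0,1)$ every factor is holomorphic and the only possible trouble is at the two endpoints. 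First I would record the quantitative bounds: for $k_i\neq1$ the factor $f_\tau^{(k_i)}$ extends continuously to $[0,1]$ and is therefore bounded there, while for $k_i=1$, using $f_\tau^{(1)}(z+1)=f_\tau^{(1)}(z)$ from Proposition \ref{prop:fay}(iii) together with the simplicity of the pole, one gets $|f_\tau^{(1)}(z)|\le C/z$ on $(0,\tfrac12]$ and $|f_\tau^{(1)}(z)|\le C/(1-z)$ on $[\tfrac12,1)$ for a suitable constant $C$.

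The main idea is to decouple the two endpoints by splitting the domain at $\tfrac12$. Up to sets of measure zero, the ordered simplex is the disjoint union over $p=0,\ldots,r$ of the products $\{0<z_1<\cdots<z_p<\tfrac12\}\times\{\tfrac12<z_{p+1}<\cdots<z_r<1\}$, and since the integrand is a product of one-variable functions, the integral over each piece factors as a product of a ``left'' integral and a ``right'' integral. On the left piece every variable lies in $(0,\tfrac12)$, so each polar factor is bounded by $C/z_i$ and the pole at $1$ is irrelevant; symmetrically, on the right piece each polar factor is bounded by $C/(1-z_i)$ and the pole at $0$ is irrelevant. Hence it suffices to treat each one-sided integral separately.

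For the left integral I would integrate from the innermost variable outward and argue by induction that the partial integral $g_j(z):=\int_{0<z_1<\cdots<z_j<z}\prod_{i\le j}f_\tau^{(k_i)}(z_i)\,dz_1\cdots dz_j$ is finite and vanishes to positive order as $z\to0^+$. Writing $\alpha_j$ for this order, a bounded factor ($k_i\neq1$) raises the order by one while a polar factor ($k_i=1$, contributing $1/z_i$) leaves it unchanged, and the integral defining $g_j$ converges at $0$ precisely when the order $\alpha_{j-1}$ accumulated so far is positive. The hypothesis $k_1\neq1$ makes the innermost factor bounded, so $\alpha_1\ge1$; since $\alpha_j$ equals the number of indices $i\le j$ with $k_i\neq1$, it never drops below $1$, so every integration against a $1/z_i$ pole is protected by the strictly positive order produced beneath it, and the left integral converges. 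The right integral is identical after the substitution $z_i\mapsto1-z_i$, which reverses the ordering and sends the poles at $1$ to poles at $0$; there the protecting hypothesis is $k_r\neq1$.

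The step I expect to be the real content is this inductive control of the one-sided integral, namely verifying that an unintegrated simple pole $1/z_i$ is always compensated by the positive vanishing order coming from the variables integrated before it. This is exactly where the conditions $k_1\neq1$ and $k_r\neq1$ enter: they guarantee that the extreme factor on each side is bounded, which seeds the induction. Everything else---the measure-zero decomposition, the factorization over the split domain, and the elementary endpoint estimates on $f_\tau^{(1)}$---is routine.
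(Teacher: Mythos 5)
Your proof is correct and takes essentially the same route as the paper's: both split the domain at $\tfrac12$ to decouple the two endpoint singularities and then integrate from the innermost variable outward, using the positive order of vanishing of each partial integral to absorb the next simple pole. The paper packages the one-sided estimate as an induction that bounds the innermost factor and replaces $g_2(z_2)$ by the regular function $z_2g_2(z_2)$, which is exactly the mechanism behind your explicit order-counting.
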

\begin{proof}
It is enough to show that the iterated integral $\int_\gamma\omega_1\cdots\omega_r$ converges absolutely, where each $\omega_i$ is a meromorphic differential form such that it is analytic on $\gamma((0,1))$, has at most a simple pole at $\gamma(0)$ and $\gamma(1)$, and satisfies the conditions that $\omega_1$ has no pole at $\gamma(0)$ and $\omega_r$ has no pole at $\gamma(1)$. We will prove this by induction.
For the case $r=1$, the result is obvious.
Now, assume the result holds for such the iterated integrals of length less than $r$. By the path composition formula, we can replace the path with $\eta_1(t):=\gamma(t/2)$ and $\eta_2(t):=\gamma(\frac{1}{2}+t/2)$ to demonstrate convergence. Furthermore, convergence can be similarly proven for $\eta_2$ as for $\eta_1$, so here we demonstrate convergence for $\eta_1$ only. Let $\eta_1^\ast\omega_i=:g_i(z)dz$. Then, $g_1(z)$ is analytic on $0\leq z\leq1$, so there exists a positive real number $M$ such that $\sup_{0\leq z\leq1}|g_1(z)|\leq M$. Then we have
\begin{align*}
&\int_{0<z_1<\cdots<z_{r}<1}\left|g_1(z_1)dz_1\cdots g_{r}(z_{r})dz_{r}\right|\\
&\leq M\int_{0<z_2<\cdots<z_{r}<1}\left|z_2g_2(z_2)dz_2\cdots g_{r}(z_{r})dz_{r}\right|. 
\end{align*}
Since $g_2(z_2)$ has at most a simple pole at $z_2=0$, $z_2g_2(z_2)$ is analytic on $0\leq z_2\leq1$. By the induction hypothesis, we have $$\int_{0<z_2<\cdots<z_{k+1}<1}\left|z_2g_2(z_2)dz_2\cdots g_{r}(z_{r})dz_{r}\right|<\infty, $$which concludes the proof. 
\end{proof}
\begin{definition}[{\cite{E2}}]\label{def:emzv}
For $r\geq0$, $\bm{k}=(k_1,\ldots,k_r)\in\mathbb{N}_{0}^r$ with $k_1,k_r\neq1$ and $\tau\in\h$, the \textbf{elliptic multiple zeta value} is defined by the following iterated integral:
\begin{align}
I^A(\bm{k};\tau)=\int_{0<z_1<\cdots<z_r<1}f_\tau^{(k_1)}(z_1)dz_1\cdots f_\tau^{(k_r)}(z_r)dz_r\label{eq:emzvdef}
\end{align}
Moreover, we refer to $k_1+\cdots +k_r$ as the weight of $\bm{k}$, and $r$ as the length of $\bm{k}$. We treat $\emptyset$ as the index of length $0$ and weight $0$, and define $I^A(\emptyset;\tau):=1$.
\end{definition}
\begin{remark}
In \cite{E2}, another type of eMZV $I^B(\bm{k};\tau)$ is defined by the same iterated integral \eqref{eq:emzvdef} with the interval of integration $[0,1]$ replaced with $[0,\tau]$. However, it can be verified from \cite[(26)]{E2} that
\[
I^B(\bm{k};\tau)=\tau^{k_1+\cdots+k_r-n}I^A(\bm{k};-1/\tau). 
\]
\end{remark}
As mentioned in \cite[Remark 2.3.4]{M1}, the definition of $I^A(\bm{k};\tau)$ agrees with one of the regularizations of iterated integrals introduced in \cite{D4} for the tangential base points $(0,(-2\pi i)^{-1}\frac{\partial}{\partial z})$ and $(1,-(-2\pi i)^{-1}\frac{\partial}{\partial z})$. These base points correspond to $(1,-\frac{\partial}{\partial w})$ and $(1,\frac{\partial}{\partial w})$ on the Tate curve $\C^{\times}/q^{\Z}$ where $w=e^{2\pi iz}$ and $q=e^{2\pi i\tau}$.
\begin{definition}\label{def:asy}
Let $f:(0,1)\to\C$ be an analytic function. Assume $f$ has an asymptotic behavior of the form
\begin{align}
f(\epsilon)=\sum_{n=0}^{N}c_n(\mathrm{log}(-2\pi i\epsilon))^n+O(\epsilon^\delta).\label{eq:asy}
\end{align}
for sufficiently small $\epsilon>0$, where $\delta>0$, $N\in\N_0$, and $c_0,\ldots,c_N\in\C$ are constants. Then, the regularized value as $\epsilon\to0$ is defined by 
\[
\mathrm{Reg}_{\epsilon\to0}^{-2\pi i}f(\epsilon):=c_0\in\C.
\]
Here, the branch of the logarithm is taken such that $\mathrm{log}(-i)=-\frac{\pi i}{2}$.
\end{definition}
For any $k_1,\ldots,k_r\geq0$, the iterated integrals
\[
\int_{\epsilon<z_1<\cdots<z_r<1-\epsilon}f_\tau^{(k_1)}(z_1)dz_1\cdots f_\tau^{(k_r)}(z_r)dz_r
\]
have asymptotic behaviors of the form \eqref{eq:asy}. This follows from the fact that $f_\tau^{(1)}$ is a meromorphic function on a certain domain containing the interval $[0,1]$ with only simple poles at $0,1$ and $f_\tau^{(n)}\,(n\neq1)$ is  analytic on $[0,1]$. 
\begin{definition}[{\cite{E2}}]\label{def:reg}
For any $\bm{k}=(k_1,\ldots,k_r)\in\N_0^r$, we define $I^A(\bm{k};\tau)$ by
\begin{align*}
I^A(\bm{k};\tau)&:=\mathrm{Reg}_{\epsilon\to0}^{-2\pi i}\int_{\epsilon<z_1<\cdots<z_r<1-\epsilon}f_\tau^{(k_1)}(z_1)dz_1\cdots f_\tau^{(k_r)}(z_r)dz_r
\end{align*}
\end{definition}
From now on, we simply denote $I^A(\bm{k})$ for our abbreviation.
\begin{remark}
When $k_1,k_r\neq1$, since the limit $$\lim_{\epsilon\to0}\int_{\epsilon<z_1<\cdots<z_r<1-\epsilon}f_\tau^{(k_1)}(z_1)dz_1\cdots f_\tau^{(k_r)}(z_r)dz_r$$ converges, we have
\begin{align*}
&\mathrm{Reg}_{\epsilon\to0}^{-2\pi i}\int_{\epsilon<z_1<\cdots<z_r<1-\epsilon}f_\tau^{(k_1)}(z_1)dz_1\cdots f_\tau^{(k_r)}(z_r)dz_r\\&=\lim_{\epsilon\to0}\int_{\epsilon<z_1<\cdots<z_r<1-\epsilon}f_\tau^{(k_1)}(z_1)dz_1\cdots f_\tau^{(k_r)}(z_r)dz_r.
\end{align*}
Therefore, Definition \ref{def:reg} agrees with Definition \ref{def:emzv} in the case with $k_1,k_r\neq1$.
\end{remark}
\begin{example}
The elliptic multiple zeta value $I^A(k)\,(k\geq0)$ of length $1$ is given by
\begin{align*}
I^A(k)=
\begin{cases}
-2\zeta(k) &(k\text{ is even}),\\
0 &(k\text{ is odd})
\end{cases}
\end{align*}
and it is independent of $\tau$. Here, we put $\zeta(0)=-\frac{1}{2}$. The fact that $I^A(k)$ is zero when $k$ is odd follows from the reflection relation \eqref{eq:ref} mentioned later. When $k$ is even, it can be obtained from the $q$-expansion of the Eisenstein-Kronecker series. For details, refer to \cite[Proposition 3.18]{M1}.
\end{example}
\subsection{Some relations among elliptic multiple zeta values}
Let $\Q\langle\mathbf{e}\rangle$ be the unitary associative non-commutative $\Q$-algebra freely generated by $\mathbf{e}:=\{e_k\}_{k\in\N_0}$, and let $\mathbf{e}^*$ denote the free
monoid generated by $\mathbf{e}$. The shuffle product $\shuffle:\Q\langle\mathbf{e}\rangle^{\otimes2}\to\Q\langle\mathbf{e}\rangle$ is recursively defined as follows:
\begin{align*}
w\shuffle1 &= 1\shuffle w = w \quad (\text{for } w\in\mathbf{e}^*),\\
e_i v' \shuffle e_j w' &= e_i(v' \shuffle e_j w') + e_j(e_i v' \shuffle w') \quad (\text{for } v', w'\in\mathbf{e}^*, i, j\geq0).
\end{align*}
$\Q\langle\mathbf{e}\rangle$ forms a $\Q$-algebra with respect to $\shuffle$. We denote this algebra in order to distinguish from the usual product, as $(\Q\langle\mathbf{e}\rangle,\shuffle)$. The shuffle product for elements of $\mathbf{e}^*$ is expressed as follows:
\[
e_{i_1}\cdots e_{i_n}\shuffle e_{i_{n+1}}\cdots e_{i_{n+m}}=\sum_{\sigma\in\mathcal{S}_{n,m}}e_{i_{\sigma(1)}}\cdots e_{i_{\sigma(n+m)}}.
\]
where $\mathcal{S}_{n,m}$ denotes the set of all permutations of $\mathfrak{S}_{n+m}$ such that
\[
\sigma^{-1}(1)<\cdots<\sigma^{-1}(n),\sigma^{-1}(n+1)<\cdots<\sigma^{-1}(n+m).
\]
Furthermore, we define the coproduct $\Delta:\Q\langle\mathbf{e}\rangle\to\Q\langle\mathbf{e}\rangle^{\otimes2}$ as follows:
\[
\Delta(e_{i_1}\cdots e_{i_n})=\sum_{j=0}^ne_{i_1}\cdots e_{i_j}\otimes e_{i_{j+1}}\cdots e_{i_n}.
\]
 Also, we define the antipode map $S:\Q\langle\mathbf{e}\rangle\to\Q\langle\mathbf{e}\rangle$ as $S(e_i)=-e_i$. It is known that $(\Q\langle\mathbf{e}\rangle,\shuffle)$ forms a Hopf algebra. Let $L^A:(\Q\langle\mathbf{e}\rangle,\shuffle)\to\mathcal{E}^A\mathcal{Z}$ be the $\Q$-linear map defined by
\[
L^A(e_{i_1}\cdots e_{i_n}):=I^A(i_1,\ldots,i_n), 
\]
where
\[
\mathcal{E}^A\mathcal{Z}:=\langle I^A(k_1,\ldots,k_r)\mid r,k_1,\ldots,k_r\in\N_0\rangle_\Q
\]
denotes the $\Q$-linear space (actually $\Q$-algebra) spanned by the elliptic multiple zeta values.
\begin{proposition}[{\cite{E2}}]\label{prop:sh}
The map $L^A:(\Q\langle\mathbf{e}\rangle,\shuffle)\to\mathcal{E}^A\mathcal{Z}$ is a $\Q$-algebra homomorphism.
\end{proposition}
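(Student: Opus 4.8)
The plan is to deduce the shuffle relation from the classical shuffle formula for iterated integrals at the truncated level $[\epsilon,1-\epsilon]$, and then to transport it through the regularization by checking that $\mathrm{Reg}_{\epsilon\to0}^{-2\pi i}$ is multiplicative. Throughout, write $I_\epsilon(e_{i_1}\cdots e_{i_n}):=\int_{\epsilon<z_1<\cdots<z_n<1-\epsilon}f_\tau^{(i_1)}(z_1)dz_1\cdots f_\tau^{(i_n)}(z_n)dz_n$ for the truncated iterated integral, extended $\Q$-linearly to $\Q\langle\mathbf{e}\rangle$, so that $I^A(\cdot)=\mathrm{Reg}_{\epsilon\to0}^{-2\pi i}I_\epsilon(\cdot)$ by Definition \ref{def:reg}.

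First I would fix two words $v=e_{i_1}\cdots e_{i_n}$ and $w=e_{i_{n+1}}\cdots e_{i_{n+m}}$ in $\mathbf{e}^*$. For each fixed, sufficiently small $\epsilon>0$, the forms $f_\tau^{(i_a)}(z)dz$ are analytic on a neighborhood of the compact interval $[\epsilon,1-\epsilon]$, since $f_\tau^{(k)}$ has poles at worst at $0$ and $1$ (and only for $k=1$). Thus $I_\epsilon(v)$ and $I_\epsilon(w)$ are genuine iterated integrals of analytic $1$-forms along a common path with fixed endpoints, and Chen's shuffle formula applies: decomposing the product of simplices $\{\epsilon<z_1<\cdots<z_n<1-\epsilon\}\times\{\epsilon<z_1'<\cdots<z_m'<1-\epsilon\}$ into the measure-disjoint union of the $\binom{n+m}{n}$ subsimplices indexed by $\mathcal{S}_{n,m}$ gives
\[
I_\epsilon(v)\,I_\epsilon(w)=\sum_{\sigma\in\mathcal{S}_{n,m}}I_\epsilon\big(e_{i_{\sigma(1)}}\cdots e_{i_{\sigma(n+m)}}\big)=I_\epsilon(v\shuffle w),
\]
valid for every such $\epsilon$.

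Next I would apply $\mathrm{Reg}_{\epsilon\to0}^{-2\pi i}$ to this identity. Each $I_\epsilon(\cdot)$ has an asymptotic expansion of the form \eqref{eq:asy}, i.e. a polynomial in $L:=\log(-2\pi i\epsilon)$ modulo an $O(\epsilon^\delta)$ remainder, as recorded before Definition \ref{def:reg}; the right-hand side, being a finite sum of such functions, is again of this form. For the left-hand side I would check that the product of two functions of type \eqref{eq:asy} is again of type \eqref{eq:asy}: writing $f=\sum_{p}a_pL^p+O(\epsilon^\delta)$ and $g=\sum_q b_qL^q+O(\epsilon^{\delta'})$, the cross terms between a polynomial in $L$ and an $O(\epsilon^\delta)$ remainder are absorbed into a new $O(\epsilon^{\delta''})$ term for any $0<\delta''<\min(\delta,\delta')$, because $L^p\epsilon^\delta=O(\epsilon^{\delta''})$. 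The coefficient of $L^0$ in the polynomial part of $fg$ is then exactly $a_0b_0$, so that $\mathrm{Reg}_{\epsilon\to0}^{-2\pi i}(fg)=\mathrm{Reg}_{\epsilon\to0}^{-2\pi i}(f)\cdot\mathrm{Reg}_{\epsilon\to0}^{-2\pi i}(g)$; equivalently, extracting $c_0$ is the same as formally setting $L=0$, and evaluation is a ring homomorphism. Applying this to the displayed identity yields $I^A(v)I^A(w)=\sum_{\sigma\in\mathcal{S}_{n,m}}I^A(e_{i_{\sigma(1)}}\cdots e_{i_{\sigma(n+m)}})$, i.e. $L^A(v)L^A(w)=L^A(v\shuffle w)$. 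Extending $\Q$-bilinearly and noting $L^A(1)=I^A(\emptyset)=1$ shows that $L^A$ is a $\Q$-algebra homomorphism.

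The main obstacle is precisely the interchange of the regularization with the product in the second step: one must confirm that the product of two expansions of type \eqref{eq:asy} remains of this type — so that $\mathrm{Reg}_{\epsilon\to0}^{-2\pi i}$ is even defined on the left-hand side — and that taking the constant term $c_0$ is genuinely multiplicative. The remaining ingredients, namely Chen's shuffle identity at fixed $\epsilon$ and the $\Q$-bilinear extension, are formal.
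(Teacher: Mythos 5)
Your proof is correct and follows essentially the same route as the paper, whose proof simply asserts that the shuffle relation ``follows directly from the shuffle product of iterated integrals.'' You additionally spell out the two details the paper leaves implicit --- Chen's shuffle decomposition at the truncated level $[\epsilon,1-\epsilon]$ and the multiplicativity of $\mathrm{Reg}_{\epsilon\to0}^{-2\pi i}$ on expansions of type \eqref{eq:asy} --- both of which are handled correctly.
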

\begin{proof}
For $e_{i_1},\ldots,e_{i_{n+m}}\in\mathbf{e}$, we need to show that $L^A(e_{i_1}\cdots e_{i_n})L^A(e_{i_{n+1}}\cdots e_{i_{n+m}})=L^A(e_{i_1}\cdots e_{i_n}\shuffle e_{j_1}\cdots e_{j_m})$, or equivalently,
\begin{align}
I^A({i_1},\ldots,{i_n})I^A({i_{n+1}},\ldots,{i_{n+m}})=\sum_{\sigma\in\mathcal{S}_{n,m}}I^A({i_{\sigma(1)}},\ldots,{i_{\sigma(n+m)}}).\label{eq:sh}
\end{align}
This follows directly from the shuffle product of iterated integrals.
\end{proof}
\begin{remark}
Equation \eqref{eq:sh} is called the \textbf{shuffle relation} for elliptic multiple zeta values.
\end{remark}
\begin{proposition}[{\cite{M1}}]\label{prop:ref}
For $w=e_{i_1}\cdots e_{i_{n}}\in\mathbf{e}^*$, we have $L^A(S(w))=(-1)^{i_1+\cdots i_n+n}L^A(w)$.
\end{proposition}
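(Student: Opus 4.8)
The plan is to deduce the statement from the \emph{reflection relation}
\[
I^A(i_1,\ldots,i_n)=(-1)^{i_1+\cdots+i_n}\,I^A(i_n,\ldots,i_1),
\]
which in turn rests on a parity property of the functions $f_\tau^{(n)}$ under $z\mapsto 1-z$. First I would record how the antipode acts on a word. The shuffle Hopf algebra $(\Q\langle\mathbf{e}\rangle,\shuffle)$ with deconcatenation coproduct $\Delta$ is connected and graded, so its antipode is unique and is given on words by reversal with a sign,
\[
S(e_{i_1}\cdots e_{i_n})=(-1)^n e_{i_n}\cdots e_{i_1};
\]
this is the standard formula for the shuffle antipode, is consistent with $S(e_i)=-e_i$, and may be confirmed by induction on $n$ from the defining convolution identity $\shuffle\circ(S\otimes\mathrm{id})\circ\Delta=\eta\circ\varepsilon$. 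Applying $L^A$ gives $L^A(S(w))=(-1)^n I^A(i_n,\ldots,i_1)$, so once the reflection relation is in hand we obtain $L^A(S(w))=(-1)^{n}(-1)^{i_1+\cdots+i_n}I^A(i_1,\ldots,i_n)=(-1)^{i_1+\cdots+i_n+n}L^A(w)$, as desired.

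To obtain the reflection relation I would first establish the functional equation $f_\tau^{(n)}(1-z)=(-1)^n f_\tau^{(n)}(z)$. Combining the $1$-periodicity in $z$ of Proposition~\ref{prop:fay}~(iii) with the parity relation (i) yields
\[
F_\tau\left(\doubledecker{\,\alpha\,}{1-z}\right)=F_\tau\left(\doubledecker{\,\alpha\,}{-z}\right)=-F_\tau\left(\doubledecker{-\alpha}{z}\right).
\]
Expanding both ends through $F_\tau\left(\doubledecker{\,\alpha\,}{\,z\,}\right)=\sum_{n\geq0}f_\tau^{(n)}(z)\alpha^{n-1}$ and comparing coefficients of $\alpha^{n-1}$ gives the claimed parity. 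I would then perform the change of variables $z_i=1-u_{r+1-i}$ in the $\epsilon$-truncated integral. This map sends the simplex $\{\epsilon<z_1<\cdots<z_r<1-\epsilon\}$ bijectively onto $\{\epsilon<u_1<\cdots<u_r<1-\epsilon\}$ with Jacobian of absolute value $1$, replaces each factor $f_\tau^{(k_i)}(z_i)$ by $(-1)^{k_i}f_\tau^{(k_i)}(u_{r+1-i})$, and, after reindexing by $j=r+1-i$, reverses the order of the indices. Hence
\[
\int_{\epsilon<z_1<\cdots<z_r<1-\epsilon}\prod_{i=1}^r f_\tau^{(k_i)}(z_i)\,dz_i=(-1)^{k_1+\cdots+k_r}\int_{\epsilon<u_1<\cdots<u_r<1-\epsilon}\prod_{j=1}^r f_\tau^{(k_{r+1-j})}(u_j)\,du_j.
\]

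Finally I would apply $\mathrm{Reg}_{\epsilon\to0}^{-2\pi i}$ to both sides. Since this operator is linear and merely extracts the constant term $c_0$ from an expansion of the form \eqref{eq:asy}, it commutes with multiplication by the scalar $(-1)^{k_1+\cdots+k_r}$; taking the regularized limit of the truncated identity therefore produces exactly the reflection relation for the regularized values. The only point demanding care is this interaction with regularization: the change of variables must be carried out \emph{before} passing to the limit, the gain being that the reflected truncated integral is literally a scalar multiple of an integral already known to admit an expansion \eqref{eq:asy}, so no new singular behaviour appears and $\mathrm{Reg}$ may be applied term by term. Assembling the three steps yields the proposition.
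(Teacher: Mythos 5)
Your proposal is correct and follows essentially the same route as the paper: identify the antipode on words as signed reversal, derive $f_\tau^{(k)}(1-z)=(-1)^{k}f_\tau^{(k)}(z)$ from Proposition~\ref{prop:fay} (i) and (iii), and substitute $z\mapsto 1-z$ in the iterated integral to obtain the reflection relation \eqref{eq:ref}. Your explicit treatment of the $\epsilon$-truncated simplex before applying $\mathrm{Reg}_{\epsilon\to0}^{-2\pi i}$ is a minor refinement rather than a different approach; the paper performs the same substitution directly on the untruncated integral.
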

\begin{proof}
We need to show that 
\begin{align}
I^A(i_n,\ldots,i_1)=(-1)^{i_1+\cdots i_n}I^A(i_1,\ldots,i_n)\label{eq:ref}
\end{align}
which follows from the following calculation:
\begin{align*}
&I^A(i_n,\ldots,i_1)\\
&=\int_{0<z_n<\cdots<z_1<1}f_\tau^{(i_n)}(z_n)dz_n\cdots f_\tau^{(i_1)}(z_1)dz_1\\
&=(-1)^n\int_{0<z_1<\cdots<z_n<1}f_\tau^{(i_1)}(1-z_1)d(1-z_1)\cdots f_\tau^{(i_n)}(1-z_n)d(1-z_n)\\
&=\int_{0<z_1<\cdots<z_n<1}f_\tau^{(i_1)}(1-z_1)dz_1\cdots f_\tau^{(i_n)}(1-z_n)dz_n\\
\intertext{Here, we have $f_\tau^{(k)}(1-z)=(-1)^kf_\tau^{(k)}(z)$  By using Proposition \ref{prop:fay} (i) and (iii), then}
&=(-1)^{i_1+\cdots+i_n}\int_{0<z_1<\cdots<z_n<1}f_\tau^{(i_1)}(z_1)dz_1\cdots f_\tau^{(i_n)}(z_n)dz_n\\
&=(-1)^{i_1+\cdots+i_n}I^A(i_1,\ldots,i_n).
\end{align*}
\end{proof}
\begin{remark}
Equation \eqref{eq:ref} is called the \textbf{reflection relation} for elliptic multiple zeta values. From the reflection relation, it can be confirmed that $I^A(k)=0$ for $k\in\N_0$ odd.
\end{remark}
By keeping the identiy $L^A\circ\varphi^{-1}(\bm{x})=I^A(\bm{x})$ where $\varphi$ is the $\Q$-linear isomorphism$$\varphi:\Q\langle\mathbf{e}\rangle\to\oplus_{r\geq0}\langle\N_0^r\rangle_\Q;\,e_{i_1}\cdots e_{i_n}\mapsto(i_1,\ldots,i_n),$$we extend $I^A(\bm{x})$ for $\bm{x}\in\oplus_{r\geq0}\langle\N_0^r\rangle_\Q$ . Finally, it is known that elliptic multiple zeta values satisfy not only shuffle relations and reflection relations but also additional relations known as Fay relations, which are derived from the Fay identity of Eisenstein-Kronecker series (Proposition \ref{prop:fay} (iv)). Below, we introduce an explicit formula demonstrated in \cite{M1} for the case of length 2.
\begin{proposition}[{\cite{M1}}]\label{prop:mat}
For any $r,s\in\N_0$ with $(r,s)\neq(1,1)$, we have:
\begin{align*}
I^A(r,s)&=-(-1)^sI^A(0,r+s)+\sum_{n=0}^{s}(-1)^{s-n}\binom{r-1+n}{r-1}I^A(r+n,s-n)\\&+\sum_{n=0}^{r}(-1)^{s+n}\binom{s-1+n}{s-1}I^A(s+n,r-n).
\end{align*}
\end{proposition}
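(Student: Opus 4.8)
The plan is to integrate the Fay identity (Proposition~\ref{prop:fay}~(iv)) over the standard simplex and to read off the coefficient of $\alpha_1^{r-1}\alpha_2^{s-1}$. First I would package the length-$2$ values into the regularized generating series
\[
\mathcal{I}(\alpha_1,\alpha_2):=\mathrm{Reg}_{\epsilon\to0}^{-2\pi i}\int_{\epsilon<z_1<z_2<1-\epsilon}F_{\tau}\left(\doubledecker{\,\alpha_1\,}{\,z_1\,}\right)F_{\tau}\left(\doubledecker{\,\alpha_2\,}{\,z_2\,}\right)dz_1\,dz_2,
\]
so that, expanding $F_{\tau}\left(\doubledecker{\,\alpha_i\,}{\,z_i\,}\right)=\sum_{k\geq0}f_\tau^{(k)}(z_i)\alpha_i^{k-1}$, the coefficient of $\alpha_1^{r-1}\alpha_2^{s-1}$ in $\mathcal{I}$ is exactly $I^A(r,s)$ by Definition~\ref{def:reg}. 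I would then replace the integrand by the right-hand side of the Fay identity, writing $\mathcal{I}=T_1+T_2$, where $T_1$ collects the factor $F_{\tau}\left(\doubledecker{\alpha_1+\alpha_2}{\;\;\;z_1}\right)F_{\tau}\left(\doubledecker{\;\;\alpha_2}{z_2-z_1}\right)$ and $T_2$ collects $F_{\tau}\left(\doubledecker{\alpha_1+\alpha_2}{\;\;\;z_2}\right)F_{\tau}\left(\doubledecker{\;\;\alpha_1}{z_1-z_2}\right)$.

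The core step is to turn each $T_i$ back into an honest generating series of elliptic multiple zeta values. For $T_2$ I would substitute $u=z_2-z_1$, $w=z_2$, which maps $\{z_1<z_2\}$ to the standard simplex $\{0<u<w<1\}$ with unit Jacobian; using $F_{\tau}\left(\doubledecker{\;\;\alpha_1}{-u}\right)=-F_{\tau}\left(\doubledecker{-\alpha_1}{\;\;u}\right)$ (Proposition~\ref{prop:fay}~(i)) this gives, formally,
\[
T_2=-\sum_{a,b\geq0}I^A(a,b)(-\alpha_1)^{a-1}(\alpha_1+\alpha_2)^{b-1}.
\]
For $T_1$ I would substitute $v_1=z_2-z_1$, $v_2=1-z_1$, again landing on $\{0<v_1<v_2<1\}$, and use $F_{\tau}\left(\doubledecker{\alpha_1+\alpha_2}{\;\;\;1-v_2}\right)=-F_{\tau}\left(\doubledecker{-(\alpha_1+\alpha_2)}{\;\;\;\;\;\;\;\;v_2}\right)$ (combining Proposition~\ref{prop:fay}~(i) and (iii), exactly the reflection $f_\tau^{(k)}(1-z)=(-1)^kf_\tau^{(k)}(z)$ used in the proof of Proposition~\ref{prop:ref}) to obtain $T_1=\sum_{a,b\geq0}I^A(a,b)(-1)^{b}\alpha_2^{a-1}(\alpha_1+\alpha_2)^{b-1}$. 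Extracting the coefficient of $\alpha_1^{r-1}\alpha_2^{s-1}$ by expanding $(\alpha_1+\alpha_2)^{b-1}=\sum_j\binom{b-1}{j}\alpha_1^j\alpha_2^{b-1-j}$ and then applying the reflection relation~\eqref{eq:ref} to reverse each index pair converts $T_1$ into the first sum and $T_2$ into the second sum of the statement, with the asserted binomial coefficients and signs.

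I expect the main obstacle to be the regularization: the change of variables does not commute with $\mathrm{Reg}^{-2\pi i}$, and reconciling the two prescriptions is exactly what produces the extra term $-(-1)^sI^A(0,r+s)$. Geometrically, the difference factors carrying $z_2-z_1$ and $z_1-z_2$ each have the simple $f_\tau^{(1)}$-pole of residue $1$ along the diagonal $z_1=z_2$, which the original integrand lacks, so $T_1$ and $T_2$ are separately logarithmically divergent there and only their sum is finite. Under the substitutions this diagonal becomes the edge where the inner integration variable tends to $0$; the thin strip by which the inherited cutoff differs from the standard $\{\epsilon<v_1<v_2<1-\epsilon\}$ is where the residue sits, and collapsing the inner variable leaves precisely an inner factor $f_\tau^{(0)}=1$, consistent with the first index $0$ in $I^A(0,r+s)$. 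The hard part is to evaluate this finite boundary contribution rigorously through the asymptotic expansion~\eqref{eq:asy}—equivalently, to make sense of the polar $(\alpha_1+\alpha_2)^{-1}$ pieces coming from the $b=0$ terms of the extraction—and to confirm that it sums to this single correction. Finally, $(r,s)\neq(1,1)$ is the unique doubly-divergent case, where both outer endpoints carry the $f_\tau^{(1)}$-pole and \eqref{eq:asy} acquires a genuine $(\log)^2$-term, so the finite-part extraction degenerates and the relation must be excluded.
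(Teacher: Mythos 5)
Your formal skeleton is sound and does reproduce the two binomial sums correctly: splitting the Fay identity into $T_1+T_2$, the two changes of variables, the sign bookkeeping via Proposition~\ref{prop:fay}~(i) and (iii), and the final reflection step all check out at the level of formal power series. But the proof is incomplete at exactly the point where all the content of the proposition sits, and you say so yourself: the correction term $-(-1)^sI^A(0,r+s)$ is \emph{motivated} (diagonal pole of residue $1$, collapsing inner variable leaving $f_\tau^{(0)}=1$) but never \emph{derived}. This is not a routine verification that can be waved through. The coefficient of $\alpha_2^{0}$ in $F_\tau\left(\doubledecker{\;\;\alpha_2}{z_2-z_1}\right)$ is $f_\tau^{(1)}(z_2-z_1)$, which has a simple pole along $z_1=z_2$, so the individual coefficients of $T_1$ and $T_2$ are logarithmically divergent integrals; the identity $\mathcal{I}=T_1+T_2$ therefore cannot be evaluated term by term without introducing a third cutoff along the diagonal and tracking the cancellation between the two pieces. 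Compounding this, $\mathrm{Reg}^{-2\pi i}_{\epsilon\to 0}$ does not commute with your substitutions (the image of the cutoff region $\{\epsilon<z_1<z_2<1-\epsilon\}$ is not $\{\epsilon<v_1<v_2<1-\epsilon\}$), and the $b=0$ terms produce $(\alpha_1+\alpha_2)^{-1}$, which is not a well-defined element of $\C[[\alpha_1,\alpha_2]]$ and whose two possible expansions differ. Each of these three effects produces finite contributions of the same shape as the claimed correction term; confirming that they combine to the single term $-(-1)^sI^A(0,r+s)$ with that sign and that index, and that nothing else survives, is the entire difficulty, and it is left undone. (Your explanation of the exclusion $(r,s)\neq(1,1)$ is also not quite right: $I^A(1,1)=0$ is perfectly finite by the shuffle relation; what happens at $(1,1)$ is that an extra $\zeta(2)$ term appears, as one sees in the $\delta_{1,k_1}\delta_{1,k_r}\zeta(i)$ terms of Theorem~\ref{emzv-fay}.)

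For comparison, the paper does not prove this proposition by direct manipulation of the double integral at all: it is obtained as the length-$2$ case of Theorem~\ref{emzv-fay}, which is proved by integrating the differential equation of the elliptic multiple polylogarithms $A_\tau$ (Propositions~\ref{prop:diff} and \ref{prop:rel}) and then taking the regularized limit $\mathrm{Reg}^{-2\pi i}_{1-z\to 0}$ via Lemma~\ref{lem:zreg}. In that route the Fay identity is applied \emph{before} any integration over a degenerating domain, so the diagonal divergence never arises, and the regularization is handled once and for all by the asymptotic-expansion lemmas (Lemmas~\ref{lem:iia} and \ref{lem:fps}); the correction terms come out automatically as the $\zeta(i)$ and $i=0$ boundary contributions in Proposition~\ref{prop:rel}. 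If you want to complete your direct argument, you would essentially be reconstructing the length-$2$ computation of \cite{M1}; within this paper, the cleaner fix is to specialize Theorem~\ref{emzv-fay} to $r=2$ and check that the coefficients $c\langle\bm{l}\mid\bm{k}\rangle$ of $u_1u_2P_{\bm{l}}(u_1,u_2)$ reproduce the stated binomial sums.
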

We will provide a proof for more general case in §2.1 (cf. Theorem \ref{emzv-fay}). It is conjectured in \cite{BMS} that the relations among elliptic multiple zeta values are exhausted by the shuffle relations, reflection relations, and Fay relations. The case for length 2 has been proven in \cite{M1}. Determining the $\Q$-linear relations for elliptic multiple zeta values of length 3 or higher remains an open problem. 
\section{Elliptic multiple polylogarithms}
In this section, we recall elliptic multiple polylogarithms and investigate their properties. In §2.1, we recall the relations between elliptic multiple polylogarithms and elliptic multiple zeta values. Subsequently, in §2.2, we prove the relations obtained from the differential equations of elliptic multiple polylogarithms.
\subsection{Relations between elliptic multiple polylogarithms and elliptic multiple zeta values}
\begin{definition}
For $\tau\in\h$ and $0<z<1$, with $a_i\in\{0,z\}$, we define $A_\tau\left(\doubledecker{u_1,\ldots,u_r}{a_1,\ldots,a_r};z\right)$ by the following iterated integral:
\begin{align*}
A_\tau\left(\doubledecker{u_1,\ldots,u_r}{a_1,\ldots,a_r};z\right)&:=\mathrm{Reg}_{\epsilon\to0}^{-2\pi i}\int_{\epsilon<z_1<\cdots<z_r<z-\epsilon}F_\tau(\doubledecker{\;\;\;u_1}{z_1-a_1})dz_1\cdots F_\tau(\doubledecker{\;\;\;u_r}{z_r-a_r})dz_r\\&\in(u_1\cdots u_r)^{-1}\C[[u_1,\ldots,u_r]].
\end{align*}
\end{definition}
We denote the coefficient of $u_1^{k_1-1}\cdots u_r^{k_r-1}$ in $A_\tau\left(\doubledecker{u_1,\ldots,u_r}{a_1,\ldots,a_r};z\right)$ as $\Gamma_\tau\left(\doubledecker{k_1,\ldots,k_r}{a_1,\ldots,a_r};z\right)$. In other words,
\[
A_\tau\left(\doubledecker{u_1,\ldots,u_r}{a_1,\ldots,a_r};z\right)=\sum_{k_1,\ldots,k_r\geq0}\Gamma_\tau\left(\doubledecker{k_1,\ldots,k_r}{a_1,\ldots,a_r};z\right)u_1^{k_1-1}\cdots u_r^{k_r-1}.
\]
The iterated integral $\Gamma_\tau\left(\doubledecker{k_1,\ldots,k_r}{a_1,\ldots,a_r};z\right)$ is referred to as an \textit{elliptic multiple polylogarithm} in \cite{KR}.
\begin{definition}
For $m,n\geq1$, let $\Omega(m,n)$ be the set of functions $\varphi:(0,1)^{m}\times[0,1)^{n}\to\C$ given by the convergent series
\begin{align*}
\varphi(x_1,\cdots,x_m;y_1,\cdots,y_n)&=\sum_{i_1,\ldots,i_m=0}^{M}\left(\sum_{j_1,\ldots,j_n=0}^{\infty}c_{i_1,\ldots,i_m,j_1,\ldots,j_n}y_1^{j_1}\cdots y_n^{j_n}\right)\\&\cdot(\log(-2\pi ix_1))^{i_1}\cdots (\log(-2\pi ix_m))^{i_m}
\end{align*}
for some $M\in\N_0$ and coefficients $\{c_{i_1,\ldots,i_m,j_1,\ldots,j_n}\}\subset\C$. Here, the branch of the logarithm is taken such that $\mathrm{log}(-i)=-\frac{\pi i}{2}$.
\end{definition}
\begin{lemma}\label{lem:iia}Let $g_i(t)\;(1\leq i\leq n)$ be meromorphic functions in a neighborhood of $t=0$, which have at most a simple pole at $t=0$ and are analytic on $(0,1)$. Then there exists $\varphi\in\Omega(2,2)$ such that, for any real numbers $0<a\leq b<1$, 
$$
\int_{a<t_1<\cdots<t_n<b}g_1(t_1)dt_1\cdots g_n(t_n)dt_n=\varphi(a,b;a,b). 
$$
\end{lemma}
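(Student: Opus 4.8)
The plan is to argue by induction on the length $n$, peeling off the outermost integration variable. The starting observation is that each $g_i$, being meromorphic near $0$ with at most a simple pole and analytic on $(0,1)$, decomposes as
\[
g_i(t) = \frac{r_i}{t} + h_i(t),
\]
where $r_i\in\C$ is the residue at $0$ and $h_i$ is analytic on a neighborhood of $[0,1)$. Throughout, the two features I must control are: (a) that only finitely many powers of the logarithms $\log(-2\pi i a)$ and $\log(-2\pi i b)$ occur, and (b) that the coefficient of each such monomial is a power series in $a$ and $b$ converging on $[0,1)$. Together these say precisely that the integral has the form $\varphi(a,b;a,b)$ with $\varphi\in\Omega(2,2)$ under the assignment $x_1=y_1=a$, $x_2=y_2=b$.

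For the base case $n=1$, direct integration gives
\[
\int_a^b g_1(t)\,dt = r_1\bigl(\log(-2\pi i b) - \log(-2\pi i a)\bigr) + \bigl(H_1(b) - H_1(a)\bigr),
\]
where $H_1$ is the antiderivative of $h_1$ normalized by $H_1(0)=0$; the two $-2\pi i$ normalizations cancel in the logarithmic difference, and $H_1$ is represented by its Taylor series at $0$, so this has the required shape with logarithmic degree at most $1$. For the inductive step I use
\[
\int_{a<t_1<\cdots<t_n<b}\prod_{k=1}^n g_k(t_k)\,dt_k = \int_a^b \Bigl(\int_{a<t_1<\cdots<t_{n-1}<t}\prod_{k=1}^{n-1}g_k(t_k)\,dt_k\Bigr)g_n(t)\,dt.
\]
By the induction hypothesis the inner integral equals $\Phi(a,t;a,t)$ for some $\Phi\in\Omega(2,2)$, so it expands as a finite sum of terms $a^{j_1}t^{j_2}(\log(-2\pi i a))^{i_1}(\log(-2\pi i t))^{i_2}$ with convergent coefficients. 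Pulling the frozen $a$-dependent factors out of the $t$-integral reduces everything to the model integrals $J_{i,j}(a,b):=\int_a^b g_n(t)\,t^{\,j}\,(\log(-2\pi i t))^{i}\,dt$.

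To evaluate $J_{i,j}$ I split $g_n(t)t^j = r_n t^{j-1} + h_n(t)t^j$. When the integrand is analytic at $0$ (the $h_n$ part, and also the polar part once $j\ge1$), repeated integration by parts against the analytic antiderivative lowers the power of the logarithm one step at a time, producing only boundary terms of the form $[\text{analytic}(b)](\log(-2\pi i b))^{\le i}$ minus the same expression in $a$, so the logarithmic degree stays $\le i$. The only term that can raise the degree is the residual pole with $j=0$, where
\[
\int_a^b \frac{r_n(\log(-2\pi i t))^{i}}{t}\,dt = \frac{r_n}{i+1}\bigl((\log(-2\pi i b))^{i+1} - (\log(-2\pi i a))^{i+1}\bigr),
\]
raising it by exactly one. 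Hence each integration increases the maximal logarithmic degree by at most one, so after $n$ steps the bound $M\le n$ holds, settling feature (a). Reassembling, multiplication of the frozen factors $a^{j_1}(\log(-2\pi i a))^{i_1}$ by the outputs of the $J_{i,j}$ collects all $a$-dependence into power series in $a$ times powers of $\log(-2\pi i a)$, and symmetrically for $b$; since products of power series convergent on $[0,1)$ remain convergent there, the class is closed and the result is again of the form $\varphi(a,b;a,b)$ with $\varphi\in\Omega(2,2)$.

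The step I expect to be the main obstacle is feature (b): verifying that every coefficient power series genuinely converges on the whole interval $[0,1)$, not merely near $0$. This is exactly where the analyticity of the $g_i$ along $(0,1)$ must be used, since the antiderivatives $H_i$ and the higher antiderivatives produced by the integrations by parts have to be expanded into Taylor series whose disk of convergence reaches up to $1$; it is also what justifies the term-by-term integration of the inner power series over $[a,b]$. The secondary care needed is the consistent handling of the fixed branch $\log(-i)=-\tfrac{\pi i}{2}$, so that the $-2\pi i$ normalizations cancel correctly in each logarithmic difference and the coefficients remain well defined.
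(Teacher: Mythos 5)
Your proof is correct and follows essentially the same route as the paper's: induction on $n$, peeling off one integration variable (you peel the largest, the paper the smallest --- a mirror image), splitting each $g_i$ into its polar part $r_i/t$ plus an analytic remainder, and integrating term by term, with your explicit integration-by-parts analysis of the model integrals $J_{i,j}$ merely filling in the step the paper dismisses with ``the statement follows.'' The one caveat you flag --- that the Taylor coefficients must genuinely converge on all of $[0,1)$, which does not follow from analyticity on $(0,1)$ alone --- is a real subtlety, but the paper's own proof makes the identical unexamined assertion, so it does not distinguish the two arguments.
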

\begin{proof}
We prove the statement by induction on $n$. For $n=1$, since $g_1(t)-\frac{c_{0}}{t}$ (where $c_{0}$ is the residue of $g_1(t)$ at $t=0$) is given by an absolutely convergent power series on $a\leq t\leq b$,
$$g_1(t)-\frac{c_{0}}{t}=\sum_{k\geq0}c_{k+1}t^{k}.$$
Thus, we have
\begin{align*}
&\int_{a<t_1<b}g_1(t_1)dt_1\\
&=\int_{a<t<b}\sum_{k\geq0}c_kt^{k-1}\\
&=c_0(\log(b)-\log(a))+\sum_{k\geq1}\frac{c_{k}}{k}(b^{k}-a^{k})\\
&=c_0(\log(-2\pi ib)-\log(-2\pi ia))+\sum_{k\geq1}\frac{c_{k}}{k}(b^{k}-a^{k}),
\end{align*}
which satisfies the required condition. 

Suppose that the statement holds for $n-1$. By induction hypothesis, there exists $\varphi\in\Omega(2,2)$ such that 
$$
\int_{a<t_2<\cdots<t_n<b}g_2(t_2)dt_2\cdots g_n(t_n)dt_n=\varphi(a,b;a,b)
$$
for any real numbers $0<a\leq b<1$. Thus, we have
\begin{align*}
&\int_{a<t_1<\cdots<t_n<b}g_1(t_1)dt_1\cdots g_n(t_n)dt_n\\
&=\int_{a<t_1<b}g_1(t_1)\varphi(t_1,b;t_1,b)dt_1. 
\end{align*}
Since $g_1(t)-\frac{c_{0}}{t}$ is given by an absolutely convergent power series on $a\leq t\leq b$,  the statement follows. 
\end{proof}
\begin{lemma}\label{lem:fps}
{\rm(i)} For any $\bm{k}=(k_1,\ldots,k_r)\in\N_0^r$, there exists $\varphi\in\Omega(2,2)$ such that
\[
\int_{\epsilon<z_1<\cdots<z_r<z-\eta}f_\tau^{(k_1)}(z_1)dz_1\cdots f_\tau^{(k_r)}(z_r)dz_r=\varphi(\epsilon,1-z+\eta;\epsilon,1-z+\eta)
\]
for any real numbers $0<z\leq 1$ and $\epsilon,\eta>0$ satisfying $0<\epsilon\leq z-\eta<1$. \\
{\rm(ii)} For any $k\in\N_0$, there exists $\varphi\in\Omega(2,2)$ such that
\[
\int_{\epsilon<t<z-\eta}f_\tau^{(k)}(t-z)dt=\varphi(1-z+\epsilon,\eta;1-z+\epsilon,\eta)
\]
for any real numbers $0<z\leq 1$ and $\epsilon,\eta>0$ satisfying $0<\epsilon\leq z-\eta<1$. \\
{\rm(iii)} For any $\bm{k}=(k_1,\ldots,k_r)\in\N_0^r$ with $r>1, k_r\neq1$, there exists $\varphi\in\Omega(2,3)$ such that
\begin{align*}
&\int_{\epsilon<z_1<\cdots<z_r<z-\eta}f_\tau^{(k_1)}(z_1)dz_1\cdots f_\tau^{(k_{r-1})}(z_{r-1})dz_{r-1}f_\tau^{(k_r)}(z_r-z)dz_r\\&=\varphi(\epsilon,1-z+\eta;\epsilon,1-z+\eta,1-z)
\end{align*}
for any real numbers $0<z\leq 1$ and $\epsilon,\eta>0$ satisfying $0<\epsilon\leq z-\eta<1$. 
\end{lemma}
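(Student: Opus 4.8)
The plan is to deduce all three parts from Lemma~\ref{lem:iia}, which already controls iterated integrals of functions having a single simple pole at the lower endpoint $0$. The genuinely new features are the second pole at $1$ carried by $f_\tau^{(1)}$ (parts (i) and (iii)), the shifted argument $f_\tau^{(k)}(t-z)$ (parts (ii) and (iii)), and the extra power-series variable $1-z$ in (iii). Throughout I would use the reflection identity $f_\tau^{(k)}(1-z)=(-1)^k f_\tau^{(k)}(z)$ from the proof of Proposition~\ref{prop:ref}, together with $f_\tau^{(k)}(-z)=(-1)^k f_\tau^{(k)}(z)$ coming from Proposition~\ref{prop:fay}(i), to transport a pole at $1$ to a pole at $0$ so that Lemma~\ref{lem:iia} becomes applicable.

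For (i) I would insert the fixed interior point $\tfrac12$ and apply the path-composition (deconcatenation) formula, writing the integral over $(\epsilon,z-\eta)$ as a finite sum of products of an integral over $(\epsilon,\tfrac12)$ and an integral over $(\tfrac12,z-\eta)$, valid whenever $\epsilon<\tfrac12<z-\eta$. On $(\epsilon,\tfrac12)$ the only pole of the integrand is at $0$, so Lemma~\ref{lem:iia} applies with endpoints $\epsilon,\tfrac12$ and yields a function of $\epsilon$ alone lying in the $\Omega$-class in the variable $\epsilon$. On $(\tfrac12,z-\eta)$ I would substitute $z_i\mapsto 1-z_i$; by the reflection identity this becomes, up to a sign and a reversal of the word, an integral over $(1-z+\eta,\tfrac12)$ whose integrand has its only pole at $0$, so Lemma~\ref{lem:iia} again gives a function of $1-z+\eta$ alone in the $\Omega$-class. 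Multiplying and summing produces exactly the product form $\varphi(\epsilon,1-z+\eta;\epsilon,1-z+\eta)\in\Omega(2,2)$. Since each factor is supplied by Lemma~\ref{lem:iia}, its power series converges for the respective variable in $[0,1)$, so $\varphi$ is genuinely defined and real-analytic on $(0,1)^2$; as the integral itself is real-analytic in $(\epsilon,1-z+\eta)$ on the connected region $\epsilon+(1-z+\eta)<1$ and the two agree on the open square where $\epsilon,1-z+\eta<\tfrac12$, the identity theorem extends the equality to all admissible parameters.

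Part (ii) then follows from the length-one case of (i): substituting $s=z-t$ and using $f_\tau^{(k)}(-s)=(-1)^k f_\tau^{(k)}(s)$ turns the integral into $(-1)^k\int_\eta^{z-\epsilon}f_\tau^{(k)}(s)\,ds$, which is the $r=1$ instance of (i) with $\epsilon,1-z+\eta$ replaced by $\eta,1-z+\epsilon$, the two $\Omega(2,2)$ variables being merely interchanged. For (iii) I would integrate the outermost variable $z_r$ last: for each fixed $z_r$, the inner integral over $\epsilon<z_1<\cdots<z_{r-1}<z_r$ is, by (i) applied with length $r-1$ and upper endpoint $z_r$, a function $\psi(\epsilon,1-z_r;\epsilon,1-z_r)\in\Omega(2,2)$. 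It then remains to integrate $\psi$ against the \emph{analytic} factor $f_\tau^{(k_r)}(z_r-z)$ (analytic precisely because $k_r\neq1$) over $z_r\in(\epsilon,z-\eta)$. Substituting $s=1-z_r$ and writing $w=1-z$, this factor becomes $(-1)^{k_r}f_\tau^{(k_r)}(s-w)$, and expanding it as a convergent power series in $s$ and $w$ introduces $w=1-z$ as a new power-series variable that carries \emph{no} logarithm, while the lower endpoint $s=1-z+\eta$ supplies the logarithm $\log(-2\pi i(1-z+\eta))$. Collecting terms gives $\varphi(\epsilon,1-z+\eta;\epsilon,1-z+\eta,1-z)\in\Omega(2,3)$.

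The main obstacle is this final integration in (iii): one must show that integrating a function of $\Omega(2,2)$-type, which carries genuine $\log(-2\pi i(1-z_r))$ singularities, against the analytic shifted factor over an interval \emph{both} of whose endpoints approach singular loci again lands in an $\Omega$-class, and that the bookkeeping attaches logarithms only to $\epsilon$ and $1-z+\eta$ while $1-z$ enters through the power series alone. This amounts to establishing a stability-under-integration lemma for the classes $\Omega(m,n)$ — analogous to the inductive step of Lemma~\ref{lem:iia} but now with a second, shifted variable — and to verifying that the resulting triple power series converges on all of $[0,1)^3$. By contrast, the parallel domain-extension point in (i) is comparatively routine once the product structure guarantees convergence on $[0,1)^2$.
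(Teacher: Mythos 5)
Your treatment of (i) is essentially the paper's: split at $\tfrac12$ by path composition, apply Lemma~\ref{lem:iia} to the left factor in the variable $\epsilon$, and, after the substitution $z_i\mapsto 1-z_i$, to the right factor in the variable $1-z+\eta$. Your reduction of (ii) to the $r=1$ case of (i) via $s=z-t$ and $f_\tau^{(k)}(-s)=(-1)^kf_\tau^{(k)}(s)$ is a valid shortcut (the paper instead writes the integral as a difference of explicit antiderivatives, splitting off $\frac1x+\frac1{x+1}$ when $k=1$); either route works.

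Part (iii) is where the real content of the lemma lies, and there your argument has a genuine gap --- one you yourself flag as ``the main obstacle'' and then defer to an unproven ``stability-under-integration lemma.'' Concretely: after substituting $s=1-z_r$ you represent the integrand of the outer integral only as a series in $1-z_r$ (with logarithms of $1-z_r$). Evaluating the resulting antiderivative at the endpoint $z_r=z-\eta$ (i.e.\ $s=1-z+\eta$) does produce the required dependence on $1-z+\eta$; but at the other endpoint $z_r=\epsilon$ (i.e.\ $s=1-\epsilon$) this representation yields a series in $1-\epsilon$ and powers of $\log(-2\pi i(1-\epsilon))$, which is \emph{not} of the form demanded by $\Omega(2,3)$ in the slot reserved for $\epsilon$ (a power series in $\epsilon$ with powers of $\log(-2\pi i\epsilon)$). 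The paper resolves exactly this by keeping \emph{two} representations of the same antiderivative $G$: one, $\Psi_1(\epsilon,z_r;\epsilon,z_r,1-z)$, obtained from Lemma~\ref{lem:iia} together with the expansion $f_\tau^{(k_r)}(z_r-z)=\sum_n c_n(z)z_r^n$, and used at the lower endpoint; the other, $\Psi_2(\epsilon,1-z_r;\epsilon,1-z_r,1-z)$, obtained as in (i) together with $f_\tau^{(k_r)}(z_r-z)=\sum_n d_n(z)(1-z_r)^n$, and used at the upper endpoint. The difference $G(z-\eta)-G(\epsilon)=\Psi_2(\epsilon,1-z+\eta;\epsilon,1-z+\eta,1-z)-\Psi_1(\epsilon,\epsilon;\epsilon,\epsilon,1-z)$ then visibly lies in $\Omega(2,3)$. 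Once this two-sided bookkeeping is in place, closure of the $\Omega$-classes under termwise antidifferentiation is elementary ($\int t^j(\log(-2\pi it))^i\,dt$ is again of that shape for $j\geq0$), so no separate stability lemma is needed; but your single-sided setup cannot deliver the $\epsilon$-dependence without it.
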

\begin{proof}
(i) Using the path composition formula, we have
\begin{align*}
&\int_{\epsilon<z_1<\cdots<z_r<z-\eta}f_\tau^{(k_1)}(z_1)dz_1\cdots f_\tau^{(k_r)}(z_r)dz_r\\&=\sum_{i=0}^{r}\int_{\epsilon<z_1<\cdots<z_i<\frac{1}{2}}f_\tau^{(k_1)}(z_1)dz_1\cdots f_\tau^{(k_i)}(z_i)dz_i\\&\cdot\int_{\frac{1}{2}<z_{i+1}<\cdots<z_r<z-\eta}f_\tau^{(k_{i+1})}(z_{i+1})dz_{i+1}\cdots f_\tau^{(k_r)}(z_r)dz_r\\
&=\sum_{i=0}^{r}\int_{\epsilon<z_1<\cdots<z_i<\frac{1}{2}}f_\tau^{(k_1)}(z_1)dz_1\cdots f_\tau^{(k_i)}(z_i)dz_i\\&\cdot\int_{1-z+\eta<w_{r}<\cdots<w_{i+1}<\frac{1}{2}}f_\tau^{(k_{r})}(1-w_{r})dw_r\cdots f_\tau^{(k_{i+1})}(1-w_{i+1})dw_{i+1}. 
\end{align*}
For each $i\;(0\leq i\leq r)$, by Lemma \ref{lem:iia}, there exist $\varphi_i\in\Omega(2,2)$ and $\psi_i\in\Omega(2,2)$ (where we set $\varphi_0=1$ and $\psi_r=1$) such that 
$$
\int_{\epsilon<z_1<\cdots<z_i<\frac{1}{2}}f_\tau^{(k_1)}(z_1)dz_1\cdots f_\tau^{(k_i)}(z_i)dz_i=\varphi_i(\epsilon,\frac{1}{2};\epsilon,\frac{1}{2})
$$
and
$$
\int_{1-z+\eta<w_{r}<\cdots<w_{i+1}<\frac{1}{2}}f_\tau^{(k_{r})}(1-w_{r})dw_r\cdots f_\tau^{(k_{i+1})}(1-w_{i+1})dw_{i+1}=\psi_i(1-z+\eta,\frac{1}{2};1-z+\eta,\frac{1}{2})
$$
for any real numbers $0<z\leq 1$ and $\epsilon,\eta>0$ satisfying $0<\epsilon\leq z-\eta<1$. Thus, we have
\begin{align*}
&\int_{\epsilon<z_1<\cdots<z_r<z-\eta}f_\tau^{(k_1)}(z_1)dz_1\cdots f_\tau^{(k_r)}(z_r)dz_r=\sum_{i=0}^{r}\varphi_i(\epsilon,\frac{1}{2};\epsilon,\frac{1}{2})\psi_i(1-z+\eta,\frac{1}{2};1-z+\eta,\frac{1}{2}), 
\end{align*}
which leads to the desired result. 
\\(ii)  When $k\neq1$, since $f_\tau^{(k_1)}(x)$ is an analytic function at $x=0$ and $x=-1$, we have
\begin{align*}
\int_{\epsilon<t<z-\eta}f_\tau^{(k)}(t-z)dt=F(-\eta)-F(\epsilon-z)
\end{align*}
where $F(x)$ is an antiderivative of $f_\tau^{(k)}(x)$, which is also analytic at $x=0$ and $x=-1$. Thus, $F(-\eta)$ is given by a convergent power series in $\eta$ and $F(\epsilon-z)$ is given by a convergent power series in $1-z+\epsilon$. Therefore we obtain the result. When $k=1$, since $f_\tau^{(1)}(x)-\frac{1}{x}-\frac{1}{x+1}$ is analytic at $x=0$ and $x=-1$, we have
\begin{align*}
&\int_{\epsilon<t<z-\eta}f_\tau^{(1)}(t-z)dt\\
&=\tilde{F}(-\eta)-\tilde{F}(\epsilon-z)-\log(\eta)+\log(z-\epsilon)-\log(1-\eta)+\log(1-z+\epsilon)\\
&=\tilde{F}(-\eta)-\tilde{F}(\epsilon-z)-\log(-2\pi i\eta)+\log(1-(1-z+\epsilon))-\log(1-\eta)+\log(-2\pi i(1-z+\epsilon))
\end{align*}
where $\tilde{F}(x)$ is an antiderivative of $f_\tau^{(1)}(x)-\frac{1}{x}-\frac{1}{x+1}$, which is also analytic at $x=0$ and $x=-1$. Thus, $\tilde{F}(-\eta)$ is given by a convergent power series in $\eta$ and $\tilde{F}(\epsilon-z)$ is given by a convergent power series in $1-z+\epsilon$. This completes the proof for this case. \\
(iii) We proceed by induction on $r\geq1$. For $\bm{k}=(k_1,\ldots,k_r)\in\N_0^r$ with $r>1, k_r\neq1$, we have
\begin{align*}
&\int_{\epsilon<z_1<\cdots<z_r<z-\eta}f_\tau^{(k_1)}(z_1)dz_1\cdots f_\tau^{(k_{r-1})}(z_{r-1})dz_{r-1}f_\tau^{(k_r)}(z_r-z)dz_r\\
&=\int_{\epsilon<z_r<z-\eta}\left(\int_{\epsilon<z_1<\cdots z_{r-1}<z_r}f_\tau^{(k_1)}(z_1)dz_1\cdots f_\tau^{(k_{r-1})}(z_{r-1})dz_{r-1}\right)f_\tau^{(k_r)}(z_r-z)dz_r. 
\end{align*}
By Lemma \ref{lem:iia}, there exists $\varphi_1\in\Omega(2,2)$ such that
\begin{align*}
\int_{\epsilon<z_1<\cdots<z_{r-1}<z_r}f_\tau^{(k_1)}(z_1)dz_1\cdots f_\tau^{(k_{r-1})}(z_{r-1})dz_{r-1}=\varphi_1(\epsilon,z_r;\epsilon,z_r)
\end{align*}
for any real numbers $0<\epsilon\leq z_r<1$. 
Since $k_r\neq1$, $f_\tau^{(k_r)}$ is analytic at any points in $\R$. Thus, $f_\tau^{(k_r)}(z_r-z)$ is given by 
$$
f_\tau^{(k_r)}(z_r-z)=\sum_{n\geq0}c_n(z)z_r^n
$$
where $c_n(z):=\frac{1}{n!}\frac{d^n}{dx^n}f_\tau^{(k_r)}(x)\mid_{x=-z}$. Since $f_\tau^{(k_r)}$ is periodic with period 1, each $c_n(z)$ is given by a convergent series in $1-z$. Hence, there exists $\psi_1\in\Omega(2,3)$ such that
\begin{align*}
\varphi_1(\epsilon,z_r;\epsilon,z_r)f_\tau^{(k_r)}(z_r-z)=\psi_1(\epsilon,z_r;\epsilon,z_r,1-z)
\end{align*}
for any real numbers $0<\epsilon\leq z_r<z\leq1$. On the other hand, in the same way as (i), there exists $\varphi_2\in\Omega(2,2)$ such that
\begin{align*}
\int_{\epsilon<z_1<\cdots<z_{r-1}<z_r}f_\tau^{(k_1)}(z_1)dz_1\cdots f_\tau^{(k_{r-1})}(z_{r-1})dz_{r-1}=\varphi_2(\epsilon,1-z_r;\epsilon,1-z_r)
\end{align*}
for any real numbers $0<\epsilon\leq z_r<1$. Since $f_\tau^{(k_r)}$ is analytic at any points in $\R$, $f_\tau^{(k_r)}(z_r-z)$ is given by 
$$
f_\tau^{(k_r)}(z_r-z)=\sum_{n\geq0}d_n(z)(1-z_r)^n
$$
where $d_n(z):=\frac{(-1)^n}{n!}\frac{d^n}{dx^n}f_\tau^{(k_r)}(x)\mid_{x=1-z}$. Since each $d_n(z)$ is given by a convergent series in $1-z$, there exists $\psi_2\in\Omega(2,3)$ such that
\begin{align*}
\varphi_2(\epsilon,1-z_r;\epsilon,1-z_r)f_\tau^{(k_r)}(z_r-z)=\psi_2(\epsilon,1-z_r;\epsilon,1-z_r,1-z)
\end{align*}
for any real numbers $0<\epsilon\leq z_r<z\leq1$. 
Therefore, there exist $\Psi_1\in\Omega(2,3)$ and $\Psi_2\in\Omega(2,3)$ such that
$$
G(z_r)=\Psi_1(\epsilon,z_r;\epsilon,z_r,1-z)=\Psi_2(\epsilon,1-z_r;\epsilon,1-z_r,1-z)
$$
for any real numbers $0<\epsilon\leq z_r<z\leq1$, where $G(z_r)$ is an antiderivative of $\left(\int_{\epsilon<z_1<\cdots<z_{r-1}<z_r}f_\tau^{(k_1)}(z_1)dz_1\cdots f_\tau^{(k_{r-1})}(z_{r-1})dz_{r-1}\right)f_\tau^{(k_r)}(z_r-z)$. Hence, we have
\begin{align*}
&\int_{\epsilon<z_r<z-\eta}\left(\int_{\epsilon<z_1<\cdots<z_{r-1}<z_r}f_\tau^{(k_1)}(z_1)dz_1\cdots f_\tau^{(k_{r-1})}(z_{r-1})dz_{r-1}\right)f_\tau^{(k_r)}(z_r-z)dz_r\\
&=G(z-\eta)-G(\epsilon)\\
&=\Psi_2(\epsilon,1-z+\eta;\epsilon,1-z+\eta,1-z)-\Psi_1(\epsilon,\epsilon;\epsilon,\epsilon,1-z), 
\end{align*}
which completes the proof. 
\end{proof}
Elliptic multiple polylogarithms are related to elliptic multiple zeta values as follows. 
\begin{lemma}\label{lem:zreg}
{\rm(i)} For any $\bm{k}=(k_1,\ldots,k_r)\in\N_0^r$, we have
\begin{align*}
\mathrm{Reg}_{1-z\to0}^{-2\pi i}\Gamma_\tau \left(\doubledecker{k_1,\ldots,k_{r}}{0\;,\ldots,\;0\;};z\right)=I^A(k_1,\ldots,k_r).
\end{align*}
{\rm(ii)}  For any $\bm{k}=(k_1,\ldots,k_r)\in\N_0^r$ where $r=1$ or $r>1, k_r\neq1$, we have
\begin{align*}
\mathrm{Reg}_{1-z\to0}^{-2\pi i}\Gamma_\tau\left(\doubledecker{k_1,\ldots,k_{r-1},k_r}{\;0\;,\ldots,\;\;0\;\;\;,\;z};z\right)=I^A(k_1,\ldots,k_r).
\end{align*}
\end{lemma}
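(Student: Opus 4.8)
\section*{Proof proposal}

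The plan is to express both sides of each identity through the single function $\varphi$ provided by Lemma~\ref{lem:fps}, thereby reducing each statement to the observation that the iterated regularization merely extracts the \emph{constant coefficient} $c_{0,\ldots,0}$ of $\varphi$, namely the coefficient of the monomial carrying no logarithm and no positive power of any variable. The computational heart is the following elementary identity: for $\varphi\in\Omega(2,2)$ with coefficients $c_{i_1,i_2,j_1,j_2}$, writing $u=\epsilon$ and $v=1-z$,
\[
\mathrm{Reg}^{-2\pi i}_{v\to0}\,\mathrm{Reg}^{-2\pi i}_{u\to0}\,\varphi(u,v+u;u,v+u)=\mathrm{Reg}^{-2\pi i}_{u\to0}\,\varphi(u,u;u,u)=c_{0,0,0,0},
\]
together with its evident $\Omega(2,3)$-analogue. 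Both equalities are checked by inserting the defining series of $\varphi$ and extracting in turn the $(\log(-2\pi iu))^0$-part (which forces $u=0$, hence kills every positive power of $u$) and then, on the remainder, the $(\log(-2\pi iv))^0$-part; the point is that after $\mathrm{Reg}_{u\to0}$ the second logarithm $\log(-2\pi i(v+u))$ degenerates to $\log(-2\pi iv)$, so genuine powers of $\log(-2\pi iv)$ survive and are removed precisely by the outer regularization, while the intermediate object is visibly a member of $\Omega(1,1)$ in $v$, so that the outer regularization is defined.

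For part (i), Lemma~\ref{lem:fps}(i) with $\eta=\epsilon$ furnishes a single $\varphi\in\Omega(2,2)$ such that the $\epsilon$-truncated integral defining $\Gamma_\tau\!\left(\doubledecker{k_1,\ldots,k_r}{0,\ldots,0};z\right)$ equals $\varphi(\epsilon,1-z+\epsilon;\epsilon,1-z+\epsilon)$ for all $0<z\le1$. Putting $u=\epsilon,v=1-z$ and applying $\mathrm{Reg}_{\epsilon\to0}$ yields $\Gamma_\tau$ as an element of $\Omega(1,1)$ in $1-z$, and the further $\mathrm{Reg}_{1-z\to0}$ produces $c_{0,0,0,0}$ by the displayed identity. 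Since the very same $\varphi$ evaluated at $z=1$ computes the truncated integral defining $I^A(k_1,\ldots,k_r)$, we also have $I^A(k_1,\ldots,k_r)=\mathrm{Reg}_{\epsilon\to0}\varphi(\epsilon,\epsilon;\epsilon,\epsilon)=c_{0,0,0,0}$, and comparison gives (i).

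For part (ii) with $r>1$ and $k_r\neq1$, Lemma~\ref{lem:fps}(iii) (again $\eta=\epsilon$) gives $\varphi\in\Omega(2,3)$ with the truncated integral defining $\Gamma_\tau\!\left(\doubledecker{k_1,\ldots,k_{r-1},k_r}{0,\ldots,0,z};z\right)$ equal to $\varphi(\epsilon,1-z+\epsilon;\epsilon,1-z+\epsilon,1-z)$, whence the iterated regularization equals $\mathrm{Reg}_{u\to0}\varphi(u,u;u,u,0)=c_{0,0,0,0,0}$ as before. It remains to identify this with $I^A(k_1,\ldots,k_r)$: because $k_r\neq1$, the function $f_\tau^{(k_r)}$ is analytic on $\R$ and $1$-periodic, so at $z=1$ one has $f_\tau^{(k_r)}(z_r-z)=f_\tau^{(k_r)}(z_r-1)=f_\tau^{(k_r)}(z_r)$, and the value at $z=1$ of the shifted integral is exactly the plain integral defining $I^A(k_1,\ldots,k_r)$, represented by some $\tilde\varphi\in\Omega(2,2)$ via Lemma~\ref{lem:fps}(i). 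Since $\varphi(u,u;u,u,0)$ and $\tilde\varphi(u,u;u,u)$ agree as functions of $u$, their constant coefficients coincide, giving $c_{0,0,0,0,0}=\tilde c_{0,0,0,0}=I^A(k_1,\ldots,k_r)$. The case $r=1$ is handled identically, using Lemma~\ref{lem:fps}(ii) in place of (iii), which also covers $k_1=1$.

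The main obstacle is the careful bookkeeping of the iterated regularization, in particular the treatment of the mixed argument $v+u=1-z+\epsilon$ inside the second logarithm: one must confirm that regularizing in $\epsilon$ first leaves a well-defined element of $\Omega(1,1)$ in $1-z$ and that the two regularizations, though performed in a fixed order, both collapse onto the single constant coefficient. The second delicate point, isolated in part (ii), is the passage to $z=1$, which relies essentially on $k_r\neq1$ (for $r>1$) so that $f_\tau^{(k_r)}$ has no pole at the integers and periodicity can be invoked to turn the shifted last factor into the unshifted one without producing new boundary singularities.
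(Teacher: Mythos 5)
Your proof is correct and takes essentially the same route as the paper: both sides are expressed through the series representation of Lemma \ref{lem:fps}, the iterated regularization (first in $\epsilon$, then in $1-z$) and the single regularization at $z=1$ are each shown to extract the same constant coefficient $c_{0,\ldots,0}$, and for (ii) the shift in the last factor is removed at $z=1$ using the analyticity and $1$-periodicity of $f_\tau^{(k_r)}$, which is exactly where $k_r\neq1$ enters. The only cosmetic difference is your introduction of an auxiliary $\tilde\varphi$ for the $z=1$ specialization, whereas the paper simply evaluates the same series at $z=1$.
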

\begin{proof}
(i) By Lemma \ref{lem:fps} (i), the integral $$\int_{\epsilon<z_1<\cdots<z_r<z-\epsilon}f_\tau^{(k_1)}(z_1)dz_1\cdots f_\tau^{(k_r)}(z_r)dz_r$$ is given by the convergent series
\begin{align*}
&\int_{\epsilon<z_1<\cdots<z_r<z-\epsilon}f_\tau^{(k_1)}(z_1)dz_1\cdots f_\tau^{(k_r)}(z_r)dz_r\\
&=\sum_{i_1,i_2=0}^{M}\left(\sum_{j_1,j_2=0}^{\infty}c_{i_1,i_2,j_1,j_2}\epsilon^{j_1}(1-z+\epsilon)^{j_2}\right)(\log(-2\pi i\epsilon))^{i_1}(\log(-2\pi i(1-z+\epsilon)))^{i_2}
\end{align*}
for any real numbers $0<z\leq 1$ and $\epsilon>0$ satisfying $0<\epsilon\leq z-\epsilon<1$. 
Thus, we have
\begin{align*}
&\mathrm{Reg}_{1-z\to0}^{-2\pi i}\Gamma_\tau \left(\doubledecker{k_1,\ldots,k_{r}}{0\;,\ldots,\;0\;};z\right)\\
&=\mathrm{Reg}_{1-z\to0}^{-2\pi i}\mathrm{Reg}_{\epsilon\to0}^{-2\pi i}\int_{\epsilon<z_1<\cdots<z_r<z-\epsilon}f_\tau^{(k_1)}(z_1)dz_1\cdots f_\tau^{(k_r)}(z_r)dz_r\\
&=\mathrm{Reg}_{1-z\to0}^{-2\pi i}\sum_{i_2=0}^{M}\left(\sum_{j_2=0}^{\infty}c_{0,i_2,0,j_2}(1-z)^{j_2}\right)(\log(-2\pi i(1-z)))^{i_2}\\
&=c_{0,0,0,0}\\
&=\mathrm{Reg}_{\epsilon\to0}^{-2\pi i}\sum_{i_1,i_2=0}^{M}\left(\sum_{j_1,j_2=0}^{\infty}c_{i_1,i_2,j_1,j_2}\epsilon^{j_1+j_2}\right)(\log(-2\pi i\epsilon))^{i_1+i_2}\\
&=\mathrm{Reg}_{\epsilon\to0}^{-2\pi i}\int_{\epsilon<z_1<\cdots<z_r<1-\epsilon}f_\tau^{(k_1)}(z_1)dz_1\cdots f_\tau^{(k_r)}(z_r)dz_r\\
&=I^A(k_1,\ldots,k_r).
\end{align*}
(ii) By Lemma \ref{lem:fps} (ii) and (iii), the integral $$\int_{\epsilon<z_1<\cdots<z_r<z-\epsilon}f_\tau^{(k_1)}(z_1)dz_1\cdots f_\tau^{(k_{r-1})}(z_{r-1})dz_{r-1}f_\tau^{(k_r)}(z_r-z)dz_r$$ is given by the convergent series
\begin{align*}
&\int_{\epsilon<z_1<\cdots<z_r<z-\epsilon}f_\tau^{(k_1)}(z_1)dz_1\cdots f_\tau^{(k_{r-1})}(z_{r-1})dz_{r-1}f_\tau^{(k_r)}(z_r-z)dz_r\\
&=\sum_{i_1,i_2=0}^{N}\left(\sum_{j_1,j_2,j_3=0}^{\infty}d_{i_1,i_2,j_1,j_2,j_3}\epsilon^{j_1}(1-z+\epsilon)^{j_2}(1-z)^{j_3}\right)(\log(-2\pi i\epsilon))^{i_1}(\log(-2\pi i(1-z+\epsilon)))^{i_2}
\end{align*}
for any real numbers $0<z\leq 1$ and $\epsilon>0$ satisfying $0<\epsilon\leq z-\epsilon<1$. When $r=1$, note that the summation over $j_3$ is taken only for $j_3=0$. 
Then we have
\begin{align*}
&\mathrm{Reg}_{1-z\to0}^{-2\pi i}\Gamma_\tau\left(\doubledecker{k_1,\ldots,k_{r-1},k_r}{\;0\;,\ldots,\;\;0\;\;\;,\;z};z\right)\\
&=\mathrm{Reg}_{1-z\to0}^{-2\pi i}\mathrm{Reg}_{\epsilon\to0}^{-2\pi i}\int_{\epsilon<z_1<\cdots<z_r<z-\epsilon}f_\tau^{(k_1)}(z_1)dz_1\cdots f_\tau^{(k_{r-1})}(z_{r-1})dz_{r-1}f_\tau^{(k_r)}(z_r-z)dz_r\\
&=\mathrm{Reg}_{1-z\to0}^{-2\pi i}\sum_{i_2=0}^{N}\left(\sum_{j_2,j_3=0}^{\infty}d_{0,i_2,0,j_2,j_3}(1-z)^{j_2+j_3}\right)(\log(-2\pi i(1-z)))^{i_2}\\
&=d_{0,0,0,0,0}\\
&=\mathrm{Reg}_{\epsilon\to0}^{-2\pi i}\sum_{i_1,i_2=0}^{N}\left(\sum_{j_1,j_2=0}^{\infty}d_{i_1,i_2,j_1,j_2,0}\epsilon^{j_1+j_2}\right)(\log(-2\pi i\epsilon))^{i_1+i_2}\\
&=\mathrm{Reg}_{\epsilon\to0}^{-2\pi i}\int_{\epsilon<z_1<\cdots<z_r<1-\epsilon}f_\tau^{(k_1)}(z_1)dz_1\cdots f_\tau^{(k_{r-1})}(z_{r-1})dz_{r-1}f_\tau^{(k_r)}(z_r-1)dz_r\\
&=\mathrm{Reg}_{\epsilon\to0}^{-2\pi i}\int_{\epsilon<z_1<\cdots<z_r<1-\epsilon}f_\tau^{(k_1)}(z_1)dz_1\cdots f_\tau^{(k_{r-1})}(z_{r-1})dz_{r-1}f_\tau^{(k_r)}(z_r)dz_r\\
&=I^A(k_1,\ldots,k_r). 
\end{align*}
\end{proof}
\subsection{Differential relations among elliptic multiple polylogarithms}
Elliptic multiple polylogarithms satisfy the following differential equation. The proof of this differential equation involves Fay identity for Eisenstein-Kronecker series.
\begin{proposition}[{cf. \cite{BMMS}}]\label{prop:diff}
The following differential equation holds:
\begin{align*}
{d}A_\tau\left(\doubledecker{u_1,\ldots,u_r}{a_1,\ldots,a_r};z\right)
=\sum_{i=1}^r&\Big(A_\tau(\doubledecker{u_1,\ldots,u_{i-1},u_i+u_{i+1},\ldots,u_r}{a_1,\ldots,a_{i-1},\;\;\;\;a_{i+1}\;\;\;,\ldots,a_r};a_{r+1})F_\tau(\doubledecker{\;\;\;u_i}{a_{i+1}-a_i})d({a_{i+1}-a_i})\\
&-A_\tau(\doubledecker{u_1,\ldots,u_{i-1}+u_i,u_{i+1},\ldots,u_r}{a_1,\ldots,\;\;\;\;a_{i-1}\;\;\;,a_{i+1},\ldots,a_r};a_{r+1})F_\tau(\doubledecker{\;\;\;u_i}{a_{i-1}-a_i})d({a_{i-1}-a_i})\Big),
\end{align*}
where, $a_0=0, a_{r+1}=z$. 
\end{proposition}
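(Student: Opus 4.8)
The plan is to obtain the differential equation by differentiating the iterated integral defining $A_\tau$ directly, regarding $z=a_{r+1}$ and $a_1,\ldots,a_r$ (with $a_0=0$ fixed) as independent variables and assembling $dA_\tau=\partial_zA_\tau\,dz+\sum_{j=1}^{r}\partial_{a_j}A_\tau\,da_j$. Since the upper limit $z$ enters only through the top of the simplex, $\partial_zA_\tau$ is given by the fundamental theorem for iterated integrals: it equals the length-$(r-1)$ integral of the first $r-1$ forms, evaluated with top variable at $z$, multiplied by $F_\tau\!\left(\doubledecker{u_r}{z-a_r}\right)$. The parameter $a_j$ enters only through the $j$-th form, and since $F_\tau\!\left(\doubledecker{u_j}{z_j-a_j}\right)$ depends on $z_j-a_j$ we have the crucial identity $\partial_{a_j}F_\tau\!\left(\doubledecker{u_j}{z_j-a_j}\right)=-\partial_{z_j}F_\tau\!\left(\doubledecker{u_j}{z_j-a_j}\right)$, which trades an external derivative for one along the integration variable.

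Next I would integrate by parts in $z_j$. Because $-\partial_{z_j}F_\tau\!\left(\doubledecker{u_j}{z_j-a_j}\right)$ is a total $z_j$-derivative and no other form involves $z_j$, the $z_j$-integral collapses to boundary contributions at the two faces $z_j=z_{j+1}$ and $z_j=z_{j-1}$ (with $z_0=a_0=0$ and $z_{r+1}=z$), carrying opposite signs. On the face $z_j=z_{j+1}=:w$ the integrand then contains the product $F_\tau\!\left(\doubledecker{u_j}{w-a_j}\right)F_\tau\!\left(\doubledecker{u_{j+1}}{w-a_{j+1}}\right)$, and here I would apply the Fay identity (Proposition \ref{prop:fay} (iv)) with first arguments $u_j,u_{j+1}$ and second arguments $w-a_j,\,w-a_{j+1}$. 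This rewrites the product as $F_\tau\!\left(\doubledecker{u_j+u_{j+1}}{w-a_{j+1}}\right)F_\tau\!\left(\doubledecker{u_j}{a_{j+1}-a_j}\right)+F_\tau\!\left(\doubledecker{u_j+u_{j+1}}{w-a_j}\right)F_\tau\!\left(\doubledecker{u_{j+1}}{a_j-a_{j+1}}\right)$; the two scalar factors depend only on the difference $a_{j+1}-a_j$ and pull out of the integral, while each surviving factor carries the merged elliptic variable $u_j+u_{j+1}$, so the remaining length-$(r-1)$ integral is again an $A_\tau$ with the two slots merged.

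The final step is bookkeeping. Each interior face $z_i=z_{i+1}$ is produced twice — as the upper face in $\partial_{a_i}A_\tau$ (with coefficient $-da_i$) and as the lower face in $\partial_{a_{i+1}}A_\tau$ (with coefficient $+da_{i+1}$) — and since the Fay expansion at the face is identical in both, the two copies combine into the single $1$-form $d(a_{i+1}-a_i)$. Matching the two Fay summands to the merged-slot integrals reproduces exactly the $+$-term indexed by $i$ and the $-$-term indexed by $i+1$ of the statement. The top endpoint $z_r=z$ is obtained by combining $\partial_zA_\tau\,dz$ with the upper-face piece of $\partial_{a_r}A_\tau\,da_r$ into $d(a_{r+1}-a_r)$ (the $i=r$ term), and the bottom endpoint $z_1=a_0=0$ comes from the lower-face piece of $\partial_{a_1}A_\tau\,da_1$, yielding $-d(a_0-a_1)$ (the $i=1$ term); in these two boundary cases the merged slot sits at an endpoint and disappears, which is the intended reading of the compressed notation $u_0+u_1$ and $u_r+u_{r+1}$ in the statement.

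The main obstacle I anticipate is not the algebra but compatibility with regularization: $A_\tau$ is defined through $\mathrm{Reg}_{\epsilon\to0}^{-2\pi i}$, so I must run the computation on the truncated simplex $\epsilon<z_1<\cdots<z_r<z-\epsilon$, track the spurious boundary terms created at $z_1=\epsilon$ and $z_r=z-\epsilon$, and verify that $d$ commutes with the regularization operator. Concretely, I would use the description of these integrals as elements of $\Omega(m,n)$ from Lemma \ref{lem:fps} to expand everything in powers of $\epsilon$ and $\log(-2\pi i\epsilon)$, differentiate the expansion coefficients in $z$ and the $a_j$ (which are analytic there), and check both that extracting the constant term commutes with differentiation and that the $\epsilon$-boundary contributions either cancel in the difference of endpoint evaluations or have vanishing constant term in the limit. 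This regularized bookkeeping, rather than the Fay manipulation, is where the genuine care is required.
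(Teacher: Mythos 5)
Your proposal is correct and follows essentially the same route as the paper: differentiate with respect to each $a_i$ using $\partial_{a_i}F_\tau\left(\doubledecker{\;\;u_i}{z_i-a_i}\right)=-\partial_{z_i}F_\tau\left(\doubledecker{\;\;u_i}{z_i-a_i}\right)$, collapse the $z_i$-integral to the boundary faces $z_i=z_{i\pm1}$, apply the Fay identity there to merge the slots and pull out the factors depending only on $a_{i\pm1}-a_i$, and sum the partial derivatives. The paper's proof is terser (it does not spell out the $\partial_z$ contribution or the regularization bookkeeping you rightly flag as the delicate point), but the mechanism is identical.
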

\begin{proof}
\begin{align*}
&\frac{\partial}{\partial a_i}A_\tau\left(\doubledecker{u_1,\ldots,u_r}{a_1,\ldots,a_r};z\right)\\
&=\int_{0<z_1<\cdots<z_{i-1}<z_{i+1}<\cdots<z_r<z}F_\tau(\doubledecker{\;\;\;u_1}{z_1-a_1})dz_1\cdots \left(\frac{\partial}{\partial a_i}\int_{z_{i-1}}^{z_{i+1}}F_\tau(\doubledecker{\;\;\;u_i}{z_i-a_i})dz_i\right)\cdots F_\tau(\doubledecker{\;\;\;u_r}{z_r-a_r})dz_r\\
&=\int_{0<z_1<\cdots<z_{i-1}<z_{i+1}<\cdots<z_r<z}F_\tau(\doubledecker{\;\;\;u_1}{z_1-a_1})dz_1\cdots F_\tau(\doubledecker{\;\;\;u_{i-1}}{z_{i-1}-a_{i-1}})F_\tau(\doubledecker{\;\;\;u_i}{z_{i-1}-a_i})dz_{i-1}\cdots F_\tau(\doubledecker{\;\;\;u_r}{z_r-a_r})dz_r 
\\&-\int_{0<z_1<\cdots<z_{i-1}<z_{i+1}<\cdots<z_r<z}F_\tau(\doubledecker{\;\;\;u_1}{z_1-a_1})dz_1\cdots F_\tau(\doubledecker{\;\;\;u_i}{z_{i+1}-a_i})F_\tau(\doubledecker{\;\;\;u_{i+1}}{z_{i+1}-a_{i+1}})dz_{i+1}\cdots F_\tau(\doubledecker{\;\;\;u_r}{z_r-a_r})dz_r .
\end{align*}
Here, by using the Fay identity for Eisenstein-Kronecker series,
\begin{align*}
F_\tau(\doubledecker{\;\;\;u_{i-1}}{z_{i-1}-a_{i-1}})F_\tau(\doubledecker{\;\;\;u_i}{z_{i-1}-a_i})&=F_\tau(\doubledecker{u_{i-1}+u_i}{z_{i-1}-a_{i-1}})F_\tau(\doubledecker{\;\;\;u_i}{a_{i-1}-a_i})+F_\tau(\doubledecker{u_{i-1}+u_i}{z_{i-1}-a_{i}})F_\tau(\doubledecker{\;u_{i-1}}{a_{i}-a_{i-1}}),\\
F_\tau(\doubledecker{\;\;\;u_i}{z_{i+1}-a_i})F_\tau(\doubledecker{\;\;\;u_{i+1}}{z_{i+1}-a_{i+1}})&=F_\tau(\doubledecker{u_{i}+u_{i+1}}{z_{i+1}-a_{i}})F_\tau(\doubledecker{\;u_{i+1}}{a_{i}-a_{i+1}})+F_\tau(\doubledecker{u_{i}+u_{i+1}}{z_{i+1}-a_{i+1}})F_\tau(\doubledecker{\;\;u_{i}}{a_{i+}-a_{i}}).
\end{align*}
Substituting this into the iterated integrals and then summing up the partial derivatives with respect to each $a_i$, we get the desired result.
\end{proof}
By using Proposition \ref{prop:diff}, we obtain the following relation among elliptic multiple polylogarithms.
\begin{proposition}\label{prop:rel}
\begin{align*}
&A_\tau \left(\doubledecker{u_1,\ldots,u_{r-1},u_r}{\;0\;,\ldots,\;\;0\;\;\;,\;z};z\right)\\
&=\sum_{i=2}^r(-1)^i\zeta(i)A_\tau \left(\doubledecker{u_i,\ldots,u_{r-1}}{0\;,\ldots,\;0\;};z\right)-\sum_{i=0}^{r-1}A_\tau \left(\doubledecker{u_1,\ldots,u_{i-1},u_{i}+\cdots+u_r,-u_{i+1}-\cdots -u_r,u_{i+1},\ldots,u_{r-1}}{0\;,\hspace{2.6cm}\ldots\hspace{2.6cm},\;0\;};z\right).
\end{align*}
\end{proposition}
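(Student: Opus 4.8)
The plan is to induct on $r$, using the differential equation of Proposition~\ref{prop:diff} to strip off the last node (the one based at $z$) and reduce it to nodes based at $0$, and then to pin down the constant of integration by the coincidence limit $z\to0$. Write $L(z)$ and $R(z)$ for the two sides. The base case $r=1$ is the identity $A_\tau\left(\doubledecker{u_1}{z};z\right)=-A_\tau\left(\doubledecker{-u_1}{0};z\right)$, which follows from the substitution $z_1\mapsto z-z_1$ together with the reflection $F_\tau\left(\doubledecker{u_1}{-z}\right)=-F_\tau\left(\doubledecker{-u_1}{z}\right)$ of Proposition~\ref{prop:fay}~(i); here no $\zeta$-term appears, matching the empty first sum.

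For the inductive step I would first differentiate $L$ in $z$. Specializing Proposition~\ref{prop:diff} to $a_1=\cdots=a_{r-1}=0$ and $a_r=a_{r+1}=z$, every summand whose differential $d(a_{i\pm1}-a_i)$ links two equal base points ($0$ with $0$, or $z$ with $z$) vanishes; in particular the $i=r$ term of the first sum vanishes since $d(a_{r+1}-a_r)=d(z-z)=0$, so no merging with the endpoint occurs. Only two terms remain:
\[
\partial_z L(z)=F_\tau\left(\doubledecker{u_{r-1}}{z}\right)A_\tau\left(\doubledecker{u_1,\ldots,u_{r-2},u_{r-1}+u_r}{0,\ldots,0,z};z\right)+F_\tau\left(\doubledecker{u_r}{-z}\right)A_\tau\left(\doubledecker{u_1,\ldots,u_{r-2},u_{r-1}+u_r}{0,\ldots,0};z\right).
\]
The first polylogarithm is again of left-hand-side type, of length $r-1$ with the last two weights merged, so the induction hypothesis expresses it through polylogarithms based at $0$; the reflection identity rewrites $F_\tau\left(\doubledecker{u_r}{-z}\right)=-F_\tau\left(\doubledecker{-u_r}{z}\right)$. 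After these substitutions every summand of $\partial_z L$ has the shape $A_\tau\left(\doubledecker{\bm{w}}{0,\ldots,0};z\right)F_\tau\left(\doubledecker{w}{z}\right)$.

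Next I would integrate from $0$ to $z$. Since $\partial_z A_\tau\left(\doubledecker{w_0,\ldots,w_s}{0,\ldots,0};z\right)=F_\tau\left(\doubledecker{w_s}{z}\right)A_\tau\left(\doubledecker{w_0,\ldots,w_{s-1}}{0,\ldots,0};z\right)$ and the positive-length polylogarithms based at $0$ vanish at $z=0$, integrating a summand $A_\tau\left(\doubledecker{\bm{w}}{0,\ldots,0};z'\right)F_\tau\left(\doubledecker{w}{z'}\right)$ simply appends $w$ as a new last entry. A direct bookkeeping then shows that the appended sequences reproduce precisely the sequences $u_i,\ldots,u_{r-1}$ $(2\le i\le r-1)$ occurring in the first sum and the sequences $u_1,\ldots,u_{i-1},u_i+\cdots+u_r,-u_{i+1}-\cdots-u_r,u_{i+1},\ldots,u_{r-1}$ $(0\le i\le r-1)$ occurring in the second sum on the right-hand side, with the reflected term producing the $i=r-1$ member of the second sum. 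Hence $\int_0^z\partial_{z'}L\,dz'=R(z)-(-1)^r\zeta(r)$, the single missing piece being the $z$-independent summand $(-1)^r\zeta(r)\,A_\tau(\emptyset;z)=(-1)^r\zeta(r)$ with $i=r$.

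It remains to show $\mathrm{Reg}_{z\to0}^{-2\pi i}L=(-1)^r\zeta(r)$, which I expect to be the main obstacle. Because positive-length polylogarithms based at $0$ are regularized integrals over a simplex that shrinks as $z\to0$, they regularize to $0$, so $\mathrm{Reg}_{z\to0}^{-2\pi i}R=(-1)^r\zeta(r)$ and it suffices to evaluate the coincidence limit of $L$, where the base point $z$ of the last node collides with the origin. Rescaling $z_j=zw_j$ and using $F_\tau\left(\doubledecker{u}{z}\right)=\tfrac1z+\tfrac{\theta_{\tau}'(u)}{\theta_{\tau}(u)}+O(z)$, the leading part of the limit is the classical regularized integral $\int_{0<w_1<\cdots<w_r<1}\tfrac{dw_1}{w_1}\cdots\tfrac{dw_{r-1}}{w_{r-1}}\tfrac{dw_r}{w_r-1}=(-1)^r\zeta(r)$, while the $\tau$-dependent parts $\tfrac{\theta_{\tau}'(u)}{\theta_{\tau}(u)}$ feed only into higher-order terms that regularize to $0$. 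Verifying this degeneration carefully (in particular that no further monomial survives the regularization, so that the $\tau$-dependence genuinely drops out) is the delicate point; granting it, $L(z)=\mathrm{Reg}_{z\to0}^{-2\pi i}L+\int_0^z\partial_{z'}L\,dz'=(-1)^r\zeta(r)+\bigl(R(z)-(-1)^r\zeta(r)\bigr)=R(z)$, which closes the induction.
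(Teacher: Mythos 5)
Your proposal is correct and follows essentially the same route as the paper: induction on $r$, with the base case via the reflection $F_\tau\left(\doubledecker{\,u\,}{-z}\right)=-F_\tau\left(\doubledecker{-u}{\,z\,}\right)$, the inductive step by specializing Proposition~\ref{prop:diff} (only the two non-constant differentials survive), applying the induction hypothesis, and integrating from $0$ to $z$. Your evaluation of the constant of integration by rescaling $z_j=zw_j$ and reducing to $\int_{0<w_1<\cdots<w_r<1}\frac{dw_1}{w_1}\cdots\frac{dw_r}{w_r-1}=(-1)^r\zeta(r)$ is exactly the computation the paper performs using $F_\tau\left(\doubledecker{u}{z}\right)=1/z+O(1)$.
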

\begin{proof}
We use induction on $r$. When $r = 1$, we have:
\begin{align*}
A_\tau \left(\doubledecker{u_1}{z};z\right)&=\int_{0<t<z}F_\tau(\doubledecker{\;u_1}{t-z})dt\\
&=-\int_{0<t<z}F_\tau(\doubledecker{-u_1}{z-t})dt\\
&=-\int_{0<s<z}F_\tau(\doubledecker{-u_1}{\;s})ds\\
&=A_\tau \left(\doubledecker{-u_1}{\;0};z\right).
\end{align*}
Now, for $r > 1$, we have:
\begin{align*}
&\frac{d}{dz}A_\tau \left(\doubledecker{u_1,\ldots,u_{r-1},u_r}{\;0\;,\ldots,\;\;0\;\;\;,\;z};z\right)\\
&=A_\tau \left(\doubledecker{u_1,\ldots,u_{r-1}+u_r}{\;0\;,\ldots\;,\;\;\;\;\;\;z};z\right)F_\tau(\doubledecker{u_{r-1}}{\;\;z})+A_\tau \left(\doubledecker{u_1,\ldots,u_{r-1}+u_r}{\;0\;,\ldots\;,\;\;\;\;\;\;0};z\right)F_\tau(\doubledecker{u_{r}}{-z})\\
&=A_\tau \left(\doubledecker{u_1,\ldots,u_{r-1}+u_r}{\;0\;,\ldots\;,\;\;\;\;\;\;z};z\right)F_\tau(\doubledecker{u_{r-1}}{\;\;z})-A_\tau \left(\doubledecker{u_1,\ldots,u_{r-1}+u_r}{\;0\;,\ldots\;,\;\;\;\;\;\;0};z\right)F_\tau(\doubledecker{-u_{r}}{\;\;z})
\intertext{By using the induction hypothesis, }
&=\sum_{i=2}^{r-1}(-1)^i\zeta(i)A_\tau \left(\doubledecker{u_i,\ldots,u_{r-2}}{0\;,\ldots,\;0\;};z\right)F_\tau(\doubledecker{u_{r-1}}{\;\;z})\\
&-\sum_{i=0}^{r-2}A_\tau \left(\doubledecker{u_1,\ldots,u_{i-1},u_{i}+\cdots+u_r,-u_{i+1}-\cdots-u_r,u_{i+1},\ldots,u_{r-2}}{0\;,\hspace{2.6cm}\ldots\hspace{2.6cm},\;0\;};z\right)F_\tau(\doubledecker{u_{r-1}}{\;\;z})\\
&-A_\tau \left(\doubledecker{u_1,\ldots,u_{r-1}+u_r}{\;0\;,\ldots\;,\;\;\;\;\;\;0};z\right)F_\tau(\doubledecker{-u_{r}}{\;\;z}).
\end{align*}
Integrating both sides from $0$ to $z$ gives:
\begin{align*}
A_\tau \left(\doubledecker{u_1,\ldots,u_{r-1},u_r}{\;0\;,\ldots,\;\;0\;\;\;,\;z};z\right)
&=\mathrm{Reg}_{t\to0}^{-2\pi i}A_\tau \left(\doubledecker{u_1,\ldots,u_{r-1},u_r}{\;0\;,\ldots,\;\;0\;\;\;,\;t};t\right)+\sum_{i=2}^{r-1}(-1)^i\zeta(i)A_\tau \left(\doubledecker{u_i,\ldots,u_{r-1}}{0\;,\ldots,\;0\;};z\right)\\
&-\sum_{i=0}^{r-1}A_\tau \left(\doubledecker{u_1,\ldots,u_{i-1},u_{i}+\cdots+u_r,-u_{i+1}-\cdots-u_r,u_{i+1},\ldots,u_{r-1}}{0\;,\hspace{2.6cm}\ldots\hspace{2.6cm},\;0\;};z\right).
\end{align*}
Finally, by using the fact that $F_\tau(\doubledecker{u}{z})=1/z+O(1)$, we have:
\begin{align*}
&\mathrm{Reg}_{t\to0}^{-2\pi i}A_\tau \left(\doubledecker{u_1,\ldots,u_{r-1},u_r}{\;0\;,\ldots,\;\;0\;\;\;,\;t};t\right)\\
&=\mathrm{Reg}_{t\to0}^{-2\pi i}\mathrm{Reg}_{\epsilon\to0}^{-2\pi i}\int_{\epsilon}^{t-\epsilon}\frac{dz_1}{z_1}\cdot\frac{dz_{r-1}}{z_{r-1}}\frac{dz_{r}}{z_{r}-t}\\
&=\mathrm{Reg}_{t\to0}^{-2\pi i}\mathrm{Reg}_{\epsilon\to0}^{-2\pi i}\int_{\epsilon/t}^{1-\epsilon/t}\frac{dz_1}{z_1}\cdot\frac{dz_{r-1}}{z_{r-1}}\frac{dz_{r}}{z_{r}-1}=(-1)^r\zeta(r).
\end{align*}
Combining these results, we obtain the desired conclusion.
\end{proof}
For $\bm{l}=(l_1,\ldots,l_r)\in\N_0^r$, we define the rational function $P_{\bm{l}}(u_1,\ldots,u_r)\in\Q(u_1,\ldots,u_r)$ as follows:
\[
P_{\bm{l}}(u_1,\ldots,u_r):=\sum_{i=0}^{r-1}u_1^{l_1-1}\cdots u_{i-1}^{l_{i-1}-1}(u_{i}+\cdots+u_r)^{l_i-1}(-u_{i+1}-\cdots -u_r)^{l_{i+1}-1}u_{i+1}^{l_{i+2}-1}\cdots u_{r-1}^{l_r-1}.
\]
\begin{lemma}
For any $\bm{l}=(l_1,\ldots,l_r)\in\N_0^r$, $u_1\cdots u_rP_{\bm{l}}(u_1,\ldots,u_r)$ is a $\Z$-polynomial (i.e. $u_1\cdots u_rP_{\bm{l}}(u_1,\ldots,u_r)\in\Z[u_1,\ldots,u_r]$), and furthermore, it is a homogeneous polynomial of degree $l_1+\cdots+l_r$.
\end{lemma}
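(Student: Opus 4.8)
The plan is to regard $Q:=u_1\cdots u_r\,P_{\bm{l}}$ as an element of $\Q(u_1,\ldots,u_r)$ and to prove that it has no poles; once that is done, homogeneity is a degree count and integrality is a content argument. First I would make the shape of each summand explicit. Set $S_m:=u_m+u_{m+1}+\cdots+u_r$ for $1\le m\le r$, so that $S_r=u_r$, and adopt the boundary convention that the factor $(u_i+\cdots+u_r)^{l_i-1}$ equals $1$ when $i=0$; this is forced, since otherwise the $i=0$ summand would carry one extra factor and the terms would not all have the same degree. Then the $i$-th summand $T_i$ of $P_{\bm{l}}$ is a product of $r$ powers of linear forms, one for each position $p\in\{1,\ldots,r\}$ and with exponent $l_p-1\ge-1$: the form is $u_p$ for $p\le i-1$, it is $S_i$ for $p=i$, it is $-S_{i+1}$ for $p=i+1$, and it is $u_{p-1}$ for $p\ge i+2$. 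In particular the only linear forms occurring anywhere are $u_1,\ldots,u_r$ and $S_1,\ldots,S_{r-1}$, each with exponent at least $-1$, so the poles of $P_{\bm{l}}$ can lie only along the hyperplanes $u_p=0$ and $S_m=0$. Since every $T_i$ is homogeneous of degree $\sum_{p=1}^r(l_p-1)=(l_1+\cdots+l_r)-r$, the product $Q$, once shown to be a polynomial, is automatically homogeneous of degree $l_1+\cdots+l_r$.

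Next I would dispose of the coordinate hyperplanes. Writing $v_g$ for the order of vanishing along an irreducible $g$, I note that in $T_i$ the variable $u_p$ appears as an isolated factor in at most one position (with exponent $\ge-1$) and otherwise only inside the sums $S_i,S_{i+1}$, which are not divisible by $u_p$; thus $v_{u_p}(T_i)\ge-1$ for every $i$, hence $v_{u_p}(P_{\bm{l}})\ge-1$ and $v_{u_p}(Q)\ge0$. In particular the potential pole of $T_{r-1}$ along $S_r=u_r$ is cleared here. The genuinely substantial point is therefore that $P_{\bm{l}}$ is already regular along each $S_m=0$ with $1\le m\le r-1$, because multiplication by $u_1\cdots u_r$ cannot help along a hyperplane cut out by a sum of variables. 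The form $S_m$ occurs, with exponent $l_m-1$, only in the two consecutive terms $T_m$ (there $S_i=S_m$ with $i=m$) and $T_{m-1}$ (there $-S_{i+1}=-S_m$ with $i=m-1$); no other $T_i$ involves $S_m$. Both are regular unless $l_m=0$, in which case each has a simple pole, and the claim is that the two residues cancel.

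To verify the cancellation I would pass to the linear coordinates $(u_1,\ldots,u_{m-1},S_m,u_{m+1},\ldots,u_r)$, in which $u_m=S_m-S_{m+1}$ and $S_{m-1}=u_{m-1}+S_m$. Extracting the coefficient of $S_m^{-1}$ amounts to setting $S_m=0$ in all remaining factors; on this hyperplane the substitutions $S_{m-1}=u_{m-1}$ and $u_m=-S_{m+1}$ (with any absent boundary factor read as $1$) make the surviving factors of the two terms coincide, while the prefactors $S_m^{-1}$ in $T_m$ and $(-S_m)^{-1}=-S_m^{-1}$ in $T_{m-1}$ differ by a sign. Hence $\mathrm{Res}_{S_m=0}(T_{m-1}+T_m)=0$, so $v_{S_m}(P_{\bm{l}})\ge0$ and therefore $v_{S_m}(Q)\ge0$. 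Combined with the previous step, $Q$ has non-negative valuation along every irreducible linear form occurring in it, so $Q\in\Q[u_1,\ldots,u_r]$, homogeneous of degree $l_1+\cdots+l_r$. Finally, integrality is a formal consequence: expanding each $u_1\cdots u_r\,T_i$ by the multinomial theorem exhibits it as a $\Z$-linear combination of monomials in the $u_p$ divided by at most $S_mS_{m'}$, so $Q$ lies in the localization of $\Z[u_1,\ldots,u_r]$ at the multiplicative set generated by $S_1,\ldots,S_{r-1}$; a polynomial lying in this localization must lie in $\Z[u_1,\ldots,u_r]$ by Gauss's lemma, since each $S_m$ is a primitive integral linear form. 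The step I expect to require the most care is the residue cancellation along $S_m=0$: the whole argument rests on the two neighbouring terms $T_{m-1}$ and $T_m$ agreeing after restriction to that hyperplane, and it is there that the bookkeeping of the coordinate change and the signs must be done correctly.
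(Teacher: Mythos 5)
Your proof is correct, but it follows a genuinely different route from the paper's. The paper argues by induction on the length $r$: it decomposes $P_{\bm{l}}$ as $u_1^{l_1-1}P_{l_2,\ldots,l_r}(u_2,\ldots,u_r)$ plus two boundary summands, applies the induction hypothesis to the first piece, and disposes of the leftover by an explicit case analysis on whether $l_1$ and $l_2$ vanish, the only nontrivial point being that $\bigl((u_1+\cdots+u_r)^{l_1-1}-u_1^{l_1-1}\bigr)/(u_2+\cdots+u_r)$ lies in $\Z[u_1,\ldots,u_r]$. You instead give a one-shot pole analysis of $Q=u_1\cdots u_rP_{\bm{l}}$: the only possible polar hyperplanes are $u_p=0$ (at worst simple poles in each summand, killed by the prefactor) and $u_m+\cdots+u_r=0$ for $m\le r-1$, where exactly the two adjacent summands $T_{m-1}$ and $T_m$ contribute and their residues cancel after the linear change of coordinates you describe; integrality then follows from a content/Gauss argument because the forms $u_m+\cdots+u_r$ are primitive. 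Both arguments are complete; yours makes visible the structural reason the \emph{sum}, rather than its individual terms, is a polynomial (the pairing of consecutive terms along each hyperplane) and treats all the hyperplanes symmetrically, while the paper's computation is more elementary and its recursive decomposition of $P_{\bm{l}}$ is reused verbatim in the proof of Lemma \ref{lem:c}. One caveat on conventions: you read the defining sum literally as running over $0\le i\le r-1$ with the $i=0$ factor absent, whereas the paper's base case $uP_l(u)=u\bigl((-u)^{l-1}+u^{l-1}\bigr)$ shows it intends an additional top term $u_1^{l_1-1}\cdots u_r^{l_r-1}$; since that term contributes only the monomial $u_1^{l_1}\cdots u_r^{l_r}$ after multiplication by $u_1\cdots u_r$, the discrepancy is harmless for both your argument and the statement.
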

\begin{proof}
Homogeneity is obvious, so we proceed to prove $u_1\cdots u_rP_{\bm{l}}(u_1,\ldots,u_r)\in\Z[u_1,\ldots,u_r]$ by induction on $r$. For $r=1$, we have $uP_l(u)=u\left((-u)^{l-1}+u^{l-1}\right)=\left((-1)^{l-1}+1\right)u^l\in\Z[u]$. Now, we assume $r>1$. Then, 
\begin{align*}
&u_1\cdots u_rP_{\bm{l}}(u_1,\ldots,u_r)\\
&=u_1\cdots u_r\bigg(u_1^{l_1-1}P_{l_2,\ldots,l_r}(u_2,\ldots,u_r)+\left((u_1+\cdots+u_r)^{l_1-1}-u_1^{l_1-1}\right)(-u_2-\cdots-u_r)^{l_2-1}\\
&\cdot u_2^{l_3-1}\cdots u_{r-1}^{l_r-1}+(-u_1-\cdots-u_r)^{l_1-1}u_1^{l_2-1}\cdots u_{r-1}^{l_r-1}\bigg)\\
&=u_1^{l_1}u_2\cdots u_rP_{l_2,\ldots,l_r}(u_2,\ldots,u_r)+u_2^{l_3}\cdots u_{r-1}^{l_r}u_1u_r\Big(\left((u_1+\cdots+u_r)^{l_1-1}-u_1^{l_1-1}\right)\\&\cdot(-u_2-\cdots-u_r)^{l_2-1}+(-u_1-\cdots-u_r)^{l_1-1}u_1^{l_2-1}\Big).
\end{align*}
 By the induction hypothesis, $u_2\cdots u_rP_{l_2,\ldots,l_r}(u_2,\ldots,u_r)\in\Z[u_2,\ldots,u_r]$, and we only need to show that $u_1u_r\Big(\Big((u_1+\cdots+u_r)^{l_1-1}\\-u_1^{l_1-1}\Big)(-u_2-\cdots-u_r)^{l_2-1}+(-u_1-\cdots-u_r)^{l_1-1}u_1^{l_2-1}\Big)\in\Z[u_1,\ldots,u_r]$. The cases where $l_1,l_2\geq1$ are obvious. 
\begin{enumerate}
\renewcommand{\labelenumi}{(\roman{enumi})}
\item When $l_1\geq1,l_2=0$, 
\begin{align*}
&u_1u_r\left(\left((u_1+\cdots+u_r)^{l_1-1}-u_1^{l_1-1}\right)(-u_2-\cdots-u_r)^{l_2-1}+(-u_1-\cdots-u_r)^{l_1-1}u_1^{l_2-1}\right)\\
&=-u_1u_r\frac{(u_1+\cdots+u_r)^{l_1-1}-u_1^{l_1-1}}{u_2+\cdots+u_r}+u_r(-u_1-\cdots-u_r)^{l_1-1}u_1^{l_2}\in\Z[u_1,\ldots,u_r].
\end{align*}
\item When $l_1=0,l_2\geq1$, 
\begin{align*}
&u_1u_r\left(\left((u_1+\cdots+u_r)^{l_1-1}-u_1^{l_1-1}\right)(-u_2-\cdots-u_r)^{l_2-1}+(-u_1-\cdots-u_r)^{l_1-1}u_1^{l_2-1}\right)\\
&=u_1u_r\frac{(-u_2-\cdots-u_r)^{l_2-1}-u_1^{l_2-1}}{u_1+\cdots+u_r}-u_r(-u_2-\cdots-u_r)^{l_2-1}\in\Z[u_1,\ldots,u_r].
\end{align*}
\item When $l_1=l_2=0$, 
\begin{align*}
&u_1u_r\left(\left((u_1+\cdots+u_r)^{l_1-1}-u_1^{l_1-1}\right)(-u_2-\cdots-u_r)^{l_2-1}+(-u_1-\cdots-u_r)^{l_1-1}u_1^{l_2-1}\right)\\
&=-\frac{u_1u_r}{(u_1+\cdots+u_r)(u_2+\cdots+u_r)}+\frac{u_r}{u_2+\cdots+u_r}-\frac{u_r}{u_1+\cdots+u_r}\\
&=0.
\end{align*}
\end{enumerate}
This completes the induction, and thus, the claim holds. 
\end{proof}
For $\bm{k}=(k_1,\ldots,k_r),\bm{l}=(l_1,\ldots,l_r)\in\N_0^r$, let $c\langle\bm{l}\mid\bm{k}\rangle$ denote the coefficient of $u_1^{k_1}\cdots u_r^{k_r}$ in $u_1\cdots u_rP_{\bm{l}}(u_1,\ldots,u_r)$. In other words, 
\begin{align}
u_1\cdots u_rP_{\bm{l}}(u_1,\ldots,u_r)=\sum_{\bm{k}\in\N_0^r}c\langle\bm{l}\mid\bm{k}\rangle u_1^{k_1}\cdots u_r^{k_r}\label{eq:c}. 
\end{align} 
\begin{corollary}\label{prop:empl}
For any $\bm{k}=(k_1,\ldots,k_r)\in\N_0^r$, we have 
\begin{align*}
\Gamma_\tau\left(\doubledecker{k_1,\ldots,k_{r-1},k_r}{\;0\;,\ldots,\;\;0\;\;\;,\;z};z\right)
&=\sum_{i=2}^r(-1)^i\delta_{1,k_1}\cdots\delta_{1,k_{i-1}}\delta_{1,k_r}\zeta(i)\Gamma_\tau \left(\doubledecker{k_i,\ldots,k_{r-1}}{0\;,\ldots,\;0\;};z\right)\\
&-\sum_{\bm{l}\in\N_0^r}c\langle\bm{l}\mid\bm{k}\rangle\Gamma_\tau \left(\doubledecker{l_1,\ldots,l_{r}}{0\;,\ldots,\;0\;};z\right).
\end{align*}
\end{corollary}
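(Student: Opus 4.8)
The plan is to read off this Corollary from Proposition \ref{prop:rel} by extracting, on both sides, the coefficient of $u_1^{k_1-1}\cdots u_r^{k_r-1}$ via the defining expansion
\[
A_\tau\left(\doubledecker{u_1,\ldots,u_r}{a_1,\ldots,a_r};z\right)=\sum_{k_1,\ldots,k_r\geq0}\Gamma_\tau\left(\doubledecker{k_1,\ldots,k_r}{a_1,\ldots,a_r};z\right)u_1^{k_1-1}\cdots u_r^{k_r-1}.
\]
On the left-hand side of Proposition \ref{prop:rel} this coefficient is exactly $\Gamma_\tau\left(\doubledecker{k_1,\ldots,k_{r-1},k_r}{0,\ldots,0,z};z\right)$, which is the left-hand side of the Corollary.

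First I would treat the sum $\sum_{i=2}^r(-1)^i\zeta(i)A_\tau\left(\doubledecker{u_i,\ldots,u_{r-1}}{0,\ldots,0};z\right)$. The point is that its $i$-th summand is a series in $u_i,\ldots,u_{r-1}$ alone, constant in $u_1,\ldots,u_{i-1}$ and in $u_r$. Hence its contribution to the coefficient of $u_1^{k_1-1}\cdots u_r^{k_r-1}$ vanishes unless the exponents of these absent variables are zero, i.e. unless $k_1=\cdots=k_{i-1}=k_r=1$; this is precisely the origin of the Kronecker factors $\delta_{1,k_1}\cdots\delta_{1,k_{i-1}}\delta_{1,k_r}$. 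When these conditions hold, the remaining coefficient of $u_i^{k_i-1}\cdots u_{r-1}^{k_{r-1}-1}$ is $\Gamma_\tau\left(\doubledecker{k_i,\ldots,k_{r-1}}{0,\ldots,0};z\right)$, with the convention that the length-$0$ case ($i=r$) contributes $1$. This reproduces the first sum of the Corollary.

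Next I would handle the sum $-\sum_{i=0}^{r-1}A_\tau(\cdots;z)$. Expanding each summand by the defining series and substituting its specific arguments $u_i+\cdots+u_r$, $-u_{i+1}-\cdots-u_r$, and the plain $u_j$'s for the formal variables, then summing over $i$, reproduces exactly
\[
-\sum_{i=0}^{r-1}A_\tau(\cdots;z)=-\sum_{\bm{l}\in\N_0^r}\Gamma_\tau\left(\doubledecker{l_1,\ldots,l_r}{0,\ldots,0};z\right)P_{\bm{l}}(u_1,\ldots,u_r)
\]
by the very definition of $P_{\bm{l}}$. It then suffices to take the coefficient of $u_1^{k_1-1}\cdots u_r^{k_r-1}$ in each $P_{\bm{l}}$, which by \eqref{eq:c} equals $c\langle\bm{l}\mid\bm{k}\rangle$; this yields the second sum of the Corollary.

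The only step demanding care is the legitimacy of this coefficient extraction for the a priori infinite sum over $\bm{l}$. Here I would invoke the preceding lemma: since $u_1\cdots u_rP_{\bm{l}}$ is homogeneous of degree $l_1+\cdots+l_r$, the monomial $u_1^{k_1-1}\cdots u_r^{k_r-1}$ occurs in $P_{\bm{l}}$ only when $l_1+\cdots+l_r=k_1+\cdots+k_r$, so for each fixed $\bm{k}$ only finitely many $\bm{l}$ contribute. This makes the interchange of summation and coefficient extraction legitimate and renders the right-hand side of the Corollary a finite sum. This homogeneity bookkeeping is the main---though mild---obstacle; the rest is a formal manipulation of generating series.
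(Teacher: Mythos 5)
Your proposal is correct and follows exactly the paper's argument, which simply compares the coefficients of $u_1^{k_1-1}\cdots u_r^{k_r-1}$ on both sides of Proposition \ref{prop:rel}. Your additional bookkeeping (the origin of the Kronecker deltas from the absent variables, the identification of the second sum with $\sum_{\bm{l}}\Gamma_\tau P_{\bm{l}}$, and the finiteness of the $\bm{l}$-sum via homogeneity) is a faithful elaboration of the details the paper leaves implicit.
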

\begin{proof}
It follows by comparing the coefficients of $u_1^{k_1-1}\cdots u_r^{k_r-1}$ on both sides of Proposition \ref{prop:rel}. 
\end{proof}
\section{Elliptic multiple zeta values}
In this section, we discuss Fay relations of elliptic multiple zeta values and regularization. In §3.1, we explain how taking the limit of the differential relations among elliptic multiple polylogarithms leads to Fay relations among elliptic multiple zeta values. In §3.2, By using the Fay relations among elliptic multiple zeta values, we provide a proof of the main theorem regarding regularization of elliptic multiple zeta values.
\subsection{Fay relations among elliptic multiple zeta values}
The relation among elliptic multiple polylogarithms obtained in Corollary \ref{prop:empl} yields the following relation among  elliptic multiple zeta values by taking the regularized limit $\mathrm{Reg}_{1-z\to0}^{-2\pi i}$. 
\begin{theorem}\label{emzv-fay}
For any $\bm{k}=(k_1,\ldots,k_r)\in\N_0^r$ where $r=1$ or $r>1, k_r\neq1$, we have 
\begin{align*}
I^A(\bm{k})
&=\sum_{i=2}^r(-1)^i\delta_{1,k_1}\cdots\delta_{1,k_{i-1}}\delta_{1,k_r}\zeta(i)I^A(k_i,\ldots,k_{r-1})\\
&-\sum_{\bm{l}\in\N_0^r}c\langle\bm{l}\mid\bm{k}\rangle I^A (l_1,\ldots,l_r).
\end{align*}
\end{theorem}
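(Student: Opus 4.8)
The plan is to derive the identity by applying the regularized limit $\mathrm{Reg}_{1-z\to0}^{-2\pi i}$ to both sides of the elliptic multiple polylogarithm relation in Corollary \ref{prop:empl}, and then reading off each resulting term through Lemma \ref{lem:zreg}. Concretely, Corollary \ref{prop:empl} writes
\[
\Gamma_\tau\left(\doubledecker{k_1,\ldots,k_{r-1},k_r}{\;0\;,\ldots,\;\;0\;\;\;,\;z};z\right)
\]
as a $\C$-linear combination, with $z$-independent coefficients $(-1)^i\zeta(i)$ (times Kronecker deltas) and $-c\langle\bm{l}\mid\bm{k}\rangle$, of elliptic multiple polylogarithms whose lower indices are all $0$. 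Applying $\mathrm{Reg}_{1-z\to0}^{-2\pi i}$ to the left-hand side and invoking Lemma \ref{lem:zreg}(ii) — whose hypothesis is exactly the standing assumption $r=1$ or ($r>1$, $k_r\neq1$) — yields $I^A(\bm{k})$. Applying it termwise on the right-hand side and invoking Lemma \ref{lem:zreg}(i) for each all-zero-lower-index polylogarithm converts every $\Gamma_\tau(\,\cdot\,;z)$ into the corresponding $I^A(\,\cdot\,)$, producing precisely the two claimed sums.

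For this termwise passage to be legitimate I would first record that $\mathrm{Reg}_{1-z\to0}^{-2\pi i}$ is $\C$-linear: following Definition \ref{def:asy} (with $\epsilon$ replaced by $1-z$) it extracts the constant coefficient $c_0$ from an asymptotic expansion in powers of $\log(-2\pi i(1-z))$, and this extraction is additive and commutes with multiplication by the $z$-independent constants $\zeta(i)$ and $c\langle\bm{l}\mid\bm{k}\rangle$. That each polylogarithm occurring on both sides does admit such an expansion as $1-z\to0$ is exactly what Lemma \ref{lem:fps}, parts (i) and (iii), provides: after the inner $\epsilon$-regularization defining $\Gamma_\tau$, the resulting function of $z$ lies in $\Omega(2,2)$ (respectively $\Omega(2,3)$) and hence has the required form. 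I would also note that the outer sum $\sum_{\bm{l}\in\N_0^r}$ is in fact finite, since $u_1\cdots u_r P_{\bm{l}}$ is homogeneous of degree $l_1+\cdots+l_r$, so $c\langle\bm{l}\mid\bm{k}\rangle$ vanishes unless $\bm{l}$ has the same weight as $\bm{k}$; thus only finitely many $\bm{l}$ contribute and no convergence issue arises in interchanging $\mathrm{Reg}$ with the summation.

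The one genuinely delicate point, and the step I would treat most carefully, is the compatibility of the two nested regularizations. The left-hand term carries an upper index $z$ in its last slot and is evaluated by Lemma \ref{lem:zreg}(ii), whereas every right-hand term has all lower indices $0$ and is evaluated by Lemma \ref{lem:zreg}(i); both lemmas reduce the iterated regularization $\mathrm{Reg}_{1-z\to0}^{-2\pi i}\circ\mathrm{Reg}_{\epsilon\to0}^{-2\pi i}$ to the single $\mathrm{Reg}_{\epsilon\to0}^{-2\pi i}$ defining $I^A$, but through expansions of different shape (an $\Omega(2,3)$ function versus an $\Omega(2,2)$ function). I would therefore verify that the hypothesis $k_r\neq1$ (for $r>1$) is used exactly where Lemma \ref{lem:zreg}(ii) demands it, so that the final integration variable generates no spurious logarithm, and that the constant terms extracted on each side genuinely coincide with the defining regularization of $I^A$ before the two sides are equated.
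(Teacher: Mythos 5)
Your proposal is correct and follows essentially the same route as the paper: apply $\mathrm{Reg}_{1-z\to0}^{-2\pi i}$ to Corollary \ref{prop:empl} and identify each side via Lemma \ref{lem:zreg}, with the finiteness of $\sum_{\bm{l}}$ and the linearity of the regularization justifying the termwise passage. In fact you assign the two parts of Lemma \ref{lem:zreg} to the correct sides (part (ii) for the left-hand side with last lower index $z$, part (i) for the all-zero right-hand terms), whereas the paper's own proof states them in the opposite order, evidently a typo.
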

\begin{proof}
By using Lemma \ref{lem:zreg} (i) for the left hand side of Corollary \ref{prop:empl}, we obtain
\[
\mathrm{Reg}_{1-z\to0}^{-2\pi i}\Gamma_\tau\left(\doubledecker{k_1,\ldots,k_{r-1},k_r}{\;0\;,\ldots,\;\;0\;\;\;,\;z};z\right)=I^A(\bm{k}).
\]
By using Lemma \ref{lem:zreg} (ii) for the right hand side of Corollary \ref{prop:empl}, we obtain
\begin{align*}
&\mathrm{Reg}_{1-z\to0}^{-2\pi i}\left(\sum_{i=2}^r(-1)^i\delta_{1,k_1}\cdots\delta_{1,k_{i-1}}\delta_{1,k_r}\zeta(i)\Gamma_\tau \left(\doubledecker{k_i,\ldots,k_{r-1}}{0\;,\ldots,\;0\;};z\right)-\sum_{\bm{l}\in\N_0^r}c\langle\bm{l}\mid\bm{k}\rangle\Gamma_\tau \left(\doubledecker{l_1,\ldots,l_{r}}{0\;,\ldots,\;0\;};z\right)\right)\\
&=\sum_{i=2}^r(-1)^i\delta_{1,k_1}\cdots\delta_{1,k_{i-1}}\delta_{1,k_r}\zeta(i)I^A(k_i,\ldots,k_{r-1})-\sum_{\bm{l}\in\N_0^r}c\langle\bm{l}\mid\bm{k}\rangle I^A (l_1,\ldots,l_r).
\end{align*}
\end{proof}
\subsection{Regularization}
First, we investigate properties of $c\langle\bm{l}\mid\bm{k}\rangle$ in eq. \eqref{eq:c}. 
\begin{lemma}\label{lem:c}
{\rm(i)} $c\langle l_1,\ldots,l_r\mid k_1,\ldots,k_{r-2},0,k_r\rangle=\delta_{0,l_r}\,c\langle l_1,\ldots,l_{r-1}\mid k_1,\ldots,k_{r-2},k_r\rangle$.
{\rm(ii)} If $l_1$ is even, 
\begin{align*}
c\langle l_1,1,l_3,\ldots,l_r\mid k_1\ldots,k_r\rangle&=\delta_{l_1,k_1}(c\langle 1,l_3,\ldots,l_r\mid k_2,\ldots,k_r\rangle-\delta_{l_3,k_2}\cdots\delta_{l_r,k_{r-1}}\delta_{1,k_r}).
\end{align*}
\end{lemma}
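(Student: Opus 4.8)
The plan is to argue entirely on the combinatorial side, working from the defining expansion \eqref{eq:c} together with the term-by-term structure of $P_{\bm{l}}$ recorded in the proof of the preceding lemma (the one establishing $u_1\cdots u_rP_{\bm{l}}\in\Z[u_1,\ldots,u_r]$). From that proof I would take the first-variable recursion, valid for $r>1$:
\[
P_{\bm{l}}(u_1,\ldots,u_r)=u_1^{l_1-1}P_{l_2,\ldots,l_r}(u_2,\ldots,u_r)+\big[(u_1+\cdots+u_r)^{l_1-1}-u_1^{l_1-1}\big](-u_2-\cdots-u_r)^{l_2-1}u_2^{l_3-1}\cdots u_{r-1}^{l_r-1}+(-u_1-\cdots-u_r)^{l_1-1}u_1^{l_2-1}\cdots u_{r-1}^{l_r-1}.
\]
Since $u_1\cdots u_rP_{\bm{l}}$ is an honest polynomial, every coefficient extraction below is a genuine extraction of a polynomial coefficient.

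For part (ii) I would substitute $l_2=1$ into this recursion. Then $(-u_2-\cdots-u_r)^{l_2-1}=1$ and $u_1^{l_2-1}=1$, so the second and third summands merge into $\big[(u_1+\cdots+u_r)^{l_1-1}-u_1^{l_1-1}+(-u_1-\cdots-u_r)^{l_1-1}\big]u_2^{l_3-1}\cdots u_{r-1}^{l_r-1}$. The crux is the parity hypothesis: as $l_1$ is even, $l_1-1$ is odd, hence $(-u_1-\cdots-u_r)^{l_1-1}=-(u_1+\cdots+u_r)^{l_1-1}$ and the two $(u_1+\cdots+u_r)^{l_1-1}$ contributions cancel. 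This collapses the recursion to the product form $P_{l_1,1,l_3,\ldots,l_r}(u)=u_1^{l_1-1}\big[P_{1,l_3,\ldots,l_r}(u_2,\ldots,u_r)-u_2^{l_3-1}\cdots u_{r-1}^{l_r-1}\big]$. Multiplying by $u_1\cdots u_r$ exhibits the entire $u_1$-dependence as the single monomial $u_1^{l_1}$, forcing the factor $\delta_{l_1,k_1}$. The coefficient of $u_2^{k_2}\cdots u_r^{k_r}$ in $(u_2\cdots u_r)P_{1,l_3,\ldots,l_r}(u_2,\ldots,u_r)$ is by definition $c\langle 1,l_3,\ldots,l_r\mid k_2,\ldots,k_r\rangle$, while the coefficient of $u_2^{k_2}\cdots u_r^{k_r}$ in the single monomial $(u_2\cdots u_r)u_2^{l_3-1}\cdots u_{r-1}^{l_r-1}=u_2^{l_3}\cdots u_{r-1}^{l_r}u_r$ is $\delta_{l_3,k_2}\cdots\delta_{l_r,k_{r-1}}\delta_{1,k_r}$. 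Subtracting gives the asserted identity.

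For part (i) I would read off the coefficient of $u_{r-1}^0$ by a residue computation. Writing $\Pi:=u_1\cdots u_rP_{\bm{l}}$, one has $[u_{r-1}^0]\Pi=\big(u_1\cdots u_{r-2}u_r\big)\cdot\mathrm{Res}_{u_{r-1}=0}P_{\bm{l}}$, the residue being the coefficient of $u_{r-1}^{-1}$ in the Laurent expansion of $P_{\bm{l}}$ in $u_{r-1}$. The key structural point is that in each term $T_i$ of $P_{\bm{l}}$ the only factor that can be singular along $u_{r-1}=0$ is the explicit monomial $u_{r-1}^{l_r-1}$: every merge factor $(\pm(u_j+\cdots+u_r))^{\bullet}$ restricts at $u_{r-1}=0$ to $\pm(u_j+\cdots+u_{r-2}+u_r)^{\bullet}$, which is regular in $u_{r-1}$, and every other monomial factor $u_j^{l_j-1}$ with $j\neq r-1$ is singular only along $u_j=0$. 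Hence the residue vanishes unless $l_r=0$, yielding $\delta_{0,l_r}$. When $l_r=0$ the factor $u_{r-1}^{-1}$ occurs exactly in $T_0,\ldots,T_{r-2}$ (while $T_{r-1}$ carries no such factor and is regular at $u_{r-1}=0$), so taking the residue means deleting $u_{r-1}^{-1}$ and setting $u_{r-1}=0$ in the surviving factors. Comparing term by term, I would verify that $\mathrm{Res}_{u_{r-1}=0}P_{l_1,\ldots,l_r}\big|_{l_r=0}$, as a function of $u_1,\ldots,u_{r-2},u_r$, equals $P_{l_1,\ldots,l_{r-1}}(u_1,\ldots,u_{r-2},u_r)$ under the relabeling of the last variable; multiplying by $u_1\cdots u_{r-2}u_r$ and extracting the coefficient of $u_1^{k_1}\cdots u_{r-2}^{k_{r-2}}u_r^{k_r}$ then returns $c\langle l_1,\ldots,l_{r-1}\mid k_1,\ldots,k_{r-2},k_r\rangle$.

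The step I would treat most carefully is the index bookkeeping in part (i): confirming that the variable carrying the exponent $l_r-1$ is $u_{r-1}$ rather than $u_r$, that $u_r$ enters only through the merge factors, and that after the residue the original variable $u_r$ plays the role of the length-$(r-1)$ last variable $u_{r-1}$. The only genuine idea there is isolating the unique source of the pole at $u_{r-1}=0$; once that is pinned down, the term-by-term identification with $P_{l_1,\ldots,l_{r-1}}$ is mechanical. Part (ii), by contrast, rests solely on the single parity cancellation, after which the claim is immediate.
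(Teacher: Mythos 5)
Your proof is correct and takes essentially the same route as the paper: part (ii) is verbatim the paper's argument (first-variable recursion, $l_2=1$, and the parity cancellation of $(u_1+\cdots+u_r)^{l_1-1}$ against $(-u_1-\cdots-u_r)^{l_1-1}$ for even $l_1$), and part (i) rests on the same extraction of the $u_{r-1}^0$-coefficient with the same source of the factor $\delta_{0,l_r}$. The only difference is cosmetic: for (i) you take the residue at $u_{r-1}=0$ term by term from the definition, whereas the paper first records the last-variable recursion $P_{\bm{l}}=u_{r-1}^{l_r-1}P_{l_1,\ldots,l_{r-1}}(u_1,\ldots,u_{r-2},u_{r-1}+u_r)+u_1^{l_1-1}\cdots u_{r-2}^{l_{r-2}-1}(u_{r-1}+u_r)^{l_{r-1}-1}(-u_r)^{l_r-1}$, multiplies by $u_1\cdots u_r(u_{r-1}+u_r)$, and compares coefficients of $u_1^{k_1}\cdots u_{r-2}^{k_{r-2}}u_r^{k_r+1}$, which amounts to the same identification with the length-$(r-1)$ polynomial after setting $u_{r-1}=0$.
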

\begin{proof}
\begin{enumerate}
\renewcommand{\labelenumi}{(\roman{enumi})}
\item
By definition, 
\begin{align*}
&P_{l_1,\ldots,l_r}(u_1,\ldots,u_r)\\&=u_{r-1}^{l_r-1}P_{l_1,\ldots,l_{r-1}}(u_1,\ldots,u_{r-2},u_{r-1}+u_r)+u_1^{l_1-1}\cdots u_{r-2}^{l_{r-2}-1}(u_{r-1}+u_r)^{l_{r-1}-1}(-u_r)^{l_r-1}.
\end{align*}
By multiplying both sides by $u_1\cdots u_r(u_{r-1}+u_r)$, we obtain
\begin{align*}
&u_1\cdots u_r(u_{r-1}+u_r)P_{l_1,\ldots,l_r}(u_1,\ldots,u_r)\\&=u_1\cdots u_{r-2}(u_{r-1}+u_r)u_{r-1}^{l_r}u_rP_{l_1,\ldots,l_{r-1}}(u_1,\ldots,u_{r-2},u_{r-1}+u_r)\\&-u_1^{l_1}\cdots u_{r-2}^{l_{r-2}}u_{r-1}(u_{r-1}+u_r)^{l_{r-1}}(-u_r)^{l_r}.
\end{align*}
By comparing coefficients of $u_1^{k_1}\cdots u_{r-2}^{k_{r-2}}u_{r}^{k_r+1}$ on both sides, we obtain the desired result. 
\item
By definition, 
\begin{align*}
P_{l_1,1,l_3\ldots,l_r}(u_1,\ldots,u_r)&=u_1^{l_1-1}P_{1,l_3\ldots,l_r}(u_2,\ldots,u_r)
\\&+\left((u_1+\cdots+u_r)^{l_1-1}-u_1^{l_1-1}\right)u_2^{l_3-1}\cdots u_{r-1}^{l_r-1}\\&+(-u_1-\cdots-u_r)^{l_1-1}u_2^{l_3-1}\cdots u_{r-1}^{l_r-1}
\intertext{Since $l_1$ is even, }
&=u_1^{l_1-1}\left(P_{1,l_3\ldots,l_r}(u_2,\ldots,u_r)-u_2^{l_3-1}\cdots u_{r-1}^{l_r-1}\right).
\end{align*}
By multiplying both sides by $u_1\cdots u_r$ and comparing coefficients of $u_1^{k_1}\cdots u_{r}^{k_r}$, we obtain the desired result. 
\end{enumerate}
\end{proof}
By using shuffle relation and reflection relation, we prove a lemma that will be used later. For $n\geq0$, we define the subspace $\mathcal{L}_{n}(\mathcal{E}^A\mathcal{Z})$ of $\mathcal{E}^A\mathcal{Z}$ as follows: 
\[
\mathcal{L}_{n}(\mathcal{E}^A\mathcal{Z}):=\langle I^A(k_1,\ldots,k_r)\mid r\leq n,k_1,\ldots,k_r\in\N_0\rangle_\Q.
\]
\begin{definition}
For $\bm{k}=(k_1,\ldots,k_r)\in\N_0^r\,(r>0)$, we define $\bm{k}$ to have ``even parity'' if $r+k_1+\cdots+k_r\equiv0\mod 2$ and ``odd parity'' otherwise. 
\end{definition}
\begin{lemma}[{cf. \cite{BMS}}]\label{lem:par}
When $\bm{k}$ has even parity, the following congruence holds:
\[
I^A(\bm{k})\equiv\frac{1}{2}\left(I^A(k_1)I^A(k_2,\ldots,k_r)+I^A(k_1,\ldots,k_{r-1})I^A(k_r)\right)\mod(\mathcal{L}_{r-2}(\mathcal{E}^A\mathcal{Z}))^2.
\]
\end{lemma}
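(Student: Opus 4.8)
The plan is to exploit the Hopf-algebra structure of $(\Q\langle\mathbf{e}\rangle,\shuffle)$ together with the reflection relation, reducing the congruence to a single vanishing identity coming from the antipode. Write $w=e_{k_1}\cdots e_{k_r}$. Since $(\Q\langle\mathbf{e}\rangle,\shuffle)$ is a connected graded Hopf algebra with the deconcatenation coproduct $\Delta$ and antipode $S$, the antipode axiom applied to $w$ with $r\geq1$ yields the vanishing identity
\[
\sum_{j=0}^{r} S(e_{k_1}\cdots e_{k_j})\shuffle(e_{k_{j+1}}\cdots e_{k_r})=0.
\]
On the shuffle Hopf algebra the antipode is the signed reversal $S(e_{k_1}\cdots e_{k_j})=(-1)^j e_{k_j}\cdots e_{k_1}$, which is consistent with the normalization $S(e_i)=-e_i$. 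I would then apply the algebra homomorphism $L^A$ (Proposition \ref{prop:sh}) to this identity, turning it into a relation in $\mathcal{E}^A\mathcal{Z}$ among products of elliptic multiple zeta values:
\[
I^A(\bm k)+(-1)^r I^A(k_r,\ldots,k_1)+\sum_{j=1}^{r-1}(-1)^j I^A(k_j,\ldots,k_1)\,I^A(k_{j+1},\ldots,k_r)=0.
\]

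Next I would simplify the reversed terms using the reflection relation (Proposition \ref{prop:ref}). The second term equals $(-1)^{r+k_1+\cdots+k_r}I^A(\bm k)$, and this is $+I^A(\bm k)$ precisely because $\bm k$ has even parity; hence the first two terms combine to $2I^A(\bm k)$, giving $2I^A(\bm k)=-\sum_{j=1}^{r-1}(-1)^j I^A(k_j,\ldots,k_1)I^A(k_{j+1},\ldots,k_r)$. The $j$-th summand is a product of an EMZV of length $j$ and one of length $r-j$; for $2\leq j\leq r-2$ both lengths are at most $r-2$, so these summands already lie in $(\mathcal{L}_{r-2}(\mathcal{E}^A\mathcal{Z}))^2$ and may be discarded. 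Modulo $(\mathcal{L}_{r-2}(\mathcal{E}^A\mathcal{Z}))^2$ only the two boundary terms $j=1$ and $j=r-1$ survive.

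Finally I would identify the two surviving terms with the right-hand side. The $j=1$ term contributes exactly $I^A(k_1)\,I^A(k_2,\ldots,k_r)$. For the $j=r-1$ term, reflection rewrites $I^A(k_{r-1},\ldots,k_1)$ as $(-1)^{k_1+\cdots+k_{r-1}}I^A(k_1,\ldots,k_{r-1})$; carrying the sign $-(-1)^{r-1}$ and invoking even parity shows this term equals $(-1)^{k_r}I^A(k_1,\ldots,k_{r-1})\,I^A(k_r)$. The stray sign $(-1)^{k_r}$ is then absorbed by the fact that $I^A(k_r)=0$ whenever $k_r$ is odd (a consequence of reflection, as noted after Proposition \ref{prop:ref}), so $(-1)^{k_r}$ equals $1$ on every non-vanishing term. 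Collecting everything gives
\[
2I^A(\bm k)\equiv I^A(k_1)I^A(k_2,\ldots,k_r)+I^A(k_1,\ldots,k_{r-1})I^A(k_r)\pmod{(\mathcal{L}_{r-2}(\mathcal{E}^A\mathcal{Z}))^2},
\]
and dividing by $2$ yields the claim.

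The hard part will not be the mechanics of the antipode identity but the sign bookkeeping in the last step: the whole congruence hinges on the even-parity hypothesis turning $(-1)^{r+k_1+\cdots+k_r}$ into $+1$ and, crucially, on the vanishing $I^A(k)=0$ for odd $k$ being exactly what kills the residual factor $(-1)^{k_r}$. I would also check the degenerate small cases separately, in particular $r=2$, where the two boundary indices $j=1$ and $j=r-1$ coincide and the argument must be read accordingly.
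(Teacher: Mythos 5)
Your proposal is correct and follows essentially the same route as the paper: the antipode identity for the shuffle Hopf algebra pushed through $L^A$, reflection to turn the extreme term into $I^A(\bm k)$ via even parity, discarding the middle convolution terms into $(\mathcal{L}_{r-2}(\mathcal{E}^A\mathcal{Z}))^2$, and using $I^A(k)=0$ for odd $k$ to fix the residual sign. The only (immaterial) difference is that you use the convolution $S*\mathrm{id}$ where the paper uses $\mathrm{id}*S$, and the $r=2$ degeneracy you flag is present in the paper's own proof as well.
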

\begin{proof}
By using the Hopf algebraic properties of $(\Q\langle\mathbf{e}\rangle,\shuffle,\Delta)$ with antipode $S$, for any $\bm{k}=(k_1,\ldots,k_r)\in\N_0^r\,(r>0)$, we have
\begin{align*}
0&=\sum_{i=0}^re_{k_1}\cdots e_{k_i}\shuffle S(e_{k_{i+1}}\cdots e_{k_r})=e_{k_1}\cdots e_{k_r}+S(e_{k_1}\cdots e_{k_r})\\&+\sum_{i=1}^{r-1}e_{k_1}\cdots e_{k_i}\shuffle S(e_{k_{i+1}}\cdots e_{k_r}). 
\end{align*}
Therefore, in the case where $\bm{k}$ has even parity, 
\begin{align*}
0&=2I^A(\bm{k})+\sum_{i=1}^{r-1}(-1)^{k_{i+1}+\cdots+k_r+r-i}I^A(k_1,\ldots,k_i)I^A(k_{i+1},\ldots,k_r)\\
&\equiv2I^A(\bm{k})+(-1)^{k_1-1}I^A(k_1)I^A(k_{2},\ldots,k_r)+(-1)^{k_{r}+1}I^A(k_1,\ldots,k_{r-1})I^A(k_r)\\&\mod(\mathcal{L}_{r-2}(\mathcal{E}^A\mathcal{Z}))^2.
\end{align*}
One checks that $I^A(k)=0$ for odd $k\in\N_0$. The result then follows. 
\end{proof}
\begin{lemma}\label{lem:trans}
For a index $\bm{k}=(k_1,\ldots,k_n)$ with $k_n\neq 1$ and $m>0$, we have
\[
I^A(\bm{k},\underbrace{1,\ldots,1}_{m})=\frac{(-1)^m}{m!}I^A(\underbrace{1\shuffle\cdots\shuffle1}_{m}\shuffle(k_1,\ldots,k_{n-1}),k_n). 
\]
\end{lemma}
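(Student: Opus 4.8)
The plan is to prove the identity by induction on the number $m$ of trailing ones, using only the shuffle relation (Proposition \ref{prop:sh}) together with the vanishing $I^A(1)=0$ (valid since $1$ is odd). Throughout I would abbreviate the word $e_{k_1}\cdots e_{k_{n-1}}$ by $w$, so that the left-hand side is $I^A(w\,e_{k_n}\,e_1^m)$, and set $T_j:=I^A\big((e_1^{m-j}\shuffle w)\,e_{k_n}\,e_1^j\big)$ for $0\le j\le m$. Since $\underbrace{e_1\shuffle\cdots\shuffle e_1}_m=m!\,e_1^m$ in the shuffle algebra, the right-hand side of the lemma is exactly $(-1)^m T_0$, while $T_m=I^A(w\,e_{k_n}\,e_1^m)$ is the left-hand side. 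Thus the whole lemma reduces to establishing the single relation $T_m=(-1)^m T_0$.

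The first key step is the purely combinatorial shuffle identity
\[
e_1^m\shuffle(w\,e_{k_n})=\sum_{j=0}^m\big(e_1^{m-j}\shuffle w\big)\,e_{k_n}\,e_1^j,
\]
which I would obtain by sorting the interleavings of $e_1^m$ with $w\,e_{k_n}$ according to the number $j$ of copies of $e_1$ landing after the final letter $e_{k_n}$: those $j$ copies form a trailing block $e_1^j$, while the remaining $m-j$ copies shuffle into $w$ and are then followed by $e_{k_n}$. Applying $L^A$ and using that it is a shuffle homomorphism with $I^A(e_1)=0$, the left-hand side maps to $\tfrac{1}{m!}I^A(e_1)^m\,I^A(w\,e_{k_n})=0$ for $m\ge1$, so that $\sum_{j=0}^m T_j=0$.

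Next I would evaluate the intermediate terms $T_j$ for $1\le j\le m-1$ by the induction hypothesis. For such $j$ the index defining $T_j$ has last entry $k_n\neq1$ and exactly $j<m$ trailing ones, so the lemma for $j$ applies to each word occurring in $e_1^{m-j}\shuffle w$, yielding
\[
T_j=\frac{(-1)^j}{j!}\,I^A\Big(\big(\underbrace{e_1\shuffle\cdots\shuffle e_1}_j\shuffle(e_1^{m-j}\shuffle w)\big)\,e_{k_n}\Big).
\]
Using $\underbrace{e_1\shuffle\cdots\shuffle e_1}_j=j!\,e_1^j$ and the elementary identity $e_1^j\shuffle e_1^{m-j}=\binom{m}{j}e_1^m$, this collapses to $T_j=(-1)^j\binom{m}{j}T_0$, a formula that also holds trivially for $j=0$. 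Substituting into $\sum_{j=0}^m T_j=0$ and invoking $\sum_{j=0}^{m}(-1)^j\binom{m}{j}=(1-1)^m=0$ gives $T_m=-\sum_{j=0}^{m-1}(-1)^j\binom{m}{j}T_0=(-1)^m T_0$, as required; the base case $m=1$ needs no hypothesis since then $\sum_j T_j=0$ reads $T_0+T_1=0$ directly.

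I expect the main obstacle to be the bookkeeping in the combinatorial step: one must check carefully that, despite the repeated letter $e_1$, sorting interleavings by the size of the trailing $e_1$-block produces exactly the clean sum above with no spurious binomial coefficients, and one must confirm that the induction hypothesis is genuinely applicable. The latter hinges on the fact that the hypothesis constrains only the final entry $k_n\neq1$, so that the entries coming from $e_1^{m-j}\shuffle w$ — which may themselves terminate in $1$ — are allowed to be arbitrary.
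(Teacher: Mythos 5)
Your argument is correct, but it is organized differently from the paper's. The paper obtains a one-step recursion by expanding the single product $0=I^A(1)\,I^A(\bm{k},\underbrace{1,\ldots,1}_{m-1})$ via $e_1\shuffle(we_{k_n}e_1^{m-1})=(e_1\shuffle w)e_{k_n}e_1^{m-1}+m\,we_{k_n}e_1^{m}$, which immediately gives $I^A(\bm{k},\underbrace{1,\ldots,1}_{m})=\frac{-1}{m}I^A(1\shuffle(k_1,\ldots,k_{n-1}),k_n,\underbrace{1,\ldots,1}_{m-1})$ and then just iterates $m$ times; no binomial identities and no appeal to the statement for intermediate configurations are needed. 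You instead shuffle all $m$ ones at once, expand $0=L^A(e_1^m)\,L^A(we_{k_n})$ by grading the interleavings according to the size $j$ of the trailing $e_1$-block, and evaluate the middle terms $T_j$ ($1\le j\le m-1$) by the induction hypothesis before closing the telescope with $\sum_{j=0}^{m}(-1)^j\binom{m}{j}=0$. Your combinatorial identity $e_1^m\shuffle(we_{k_n})=\sum_{j=0}^{m}(e_1^{m-j}\shuffle w)e_{k_n}e_1^{j}$ is correct with exactly the stated multiplicities (an interleaving is determined by $j$ together with an interleaving of $e_1^{m-j}$ into $w$, and the term counts match by the hockey-stick identity), your use of the induction hypothesis is legitimate since the lemma constrains only the entry immediately preceding the trailing ones, and the coefficients $T_j=(-1)^j\binom{m}{j}T_0$ assemble correctly. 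The trade-off is that your version requires more bookkeeping (the binomial collapse $e_1^{j}\shuffle e_1^{m-j}=\binom{m}{j}e_1^{m}$ and the alternating sum) where the paper's peeling recursion is essentially mechanical; on the other hand, your single identity $\sum_{j=0}^{m}T_j=0$ displays the full shuffle structure at once rather than one letter at a time.
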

\begin{proof}
By using the shuffle relation and $I^A(1)=0$, we have
\[
0=I^A(1)I^A(\bm{k}\underbrace{1,\ldots,1}_{m-1})=mI^A(\bm{k},\underbrace{1,\ldots,1}_{m})+I^A(1\shuffle(k_1,\ldots,k_{n-1}),k_n\underbrace{1,\ldots,1}_{m-1}). 
\]
Hence, 
\[
I^A(\bm{k},\underbrace{1,\ldots,1}_{m})=\frac{-1}{m}I^A(1\shuffle(k_1,\ldots,k_{n-1}),k_n\underbrace{1,\ldots,1}_{m-1}). 
\]
By iterating this transformation, we obtain the result. 
\end{proof}
The $\Q$-subspace $\mathcal{E}^A\mathcal{Z}_{\mathrm{adm}}\subset\mathcal{E}^A\mathcal{Z}$ is defined as follows: 
\[
\mathcal{E}^A\mathcal{Z}_{\mathrm{adm}}:=\langle I^A(k_1,\ldots,k_r)\mid r,k_1,\ldots,k_r\in\N_0,k_1,k_r\neq1\rangle_\Q.
\]
Additionally, for $w\geq0$, we define subspaces of weight $w$, denoted by $(\mathcal{E}^A\mathcal{Z})_w$ and $(\mathcal{E}^A\mathcal{Z}_{\mathrm{adm}})_w$, respectively:
\begin{align*}
(\mathcal{E}^A\mathcal{Z})_w&:=\langle I^A(k_1,\ldots,k_r)\mid r,k_1,\ldots,k_r\in\N_0,k_1+\cdots+k_r=w\rangle_\Q,\\
(\mathcal{E}^A\mathcal{Z}_{\mathrm{adm}})_w&:=\langle I^A(k_1,\ldots,k_r)\mid r,k_1,\ldots,k_r\in\N_0,k_1,k_r\neq1,k_1+\cdots+k_r=w\rangle_\Q. 
\end{align*}
One checks that $\mathcal{E}^A\mathcal{Z}_{\mathrm{adm}}$ is $\Q$-subalgebra of  $\mathcal{E}^A\mathcal{Z}$ under the shuffle product. 
\begin{lemma}\label{lem:fin}
If $\bm{k}=(1,k_2,\ldots,k_{r-1},0)$ has odd parity, then we have
\[
I^A(1,k_2,\ldots,k_{r-1},0)\equiv-I^A(0,1,k_2,\ldots,k_{r-1})\mod {\mathcal{E}^A\mathcal{Z}}_{\mathrm{adm}}+(\mathcal{L}_{r-2}(\mathcal{E}^A\mathcal{Z}))^2. 
\]
\end{lemma}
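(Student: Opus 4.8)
The plan is to manufacture a single linear relation containing both $I^A(1,k_2,\ldots,k_{r-1},0)$ and $I^A(0,1,k_2,\ldots,k_{r-1})$ in which every remaining term is admissible or decomposable. The natural device is the shuffle relation together with the special value $I^A(0)=-2\zeta(0)=1$. Since $L^A$ is a shuffle homomorphism (Proposition~\ref{prop:sh}),
\[
I^A(1,k_2,\ldots,k_{r-1})=I^A(0)\,I^A(1,k_2,\ldots,k_{r-1})=I^A\bigl(e_0\shuffle e_1e_{k_2}\cdots e_{k_{r-1}}\bigr),
\]
and expanding the shuffle inserts $e_0$ into each of the $r$ slots of $(1,k_2,\ldots,k_{r-1})$. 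The two extreme insertions are exactly our two terms, so I would rewrite this as
\[
I^A(1,k_2,\ldots,k_{r-1},0)+I^A(0,1,k_2,\ldots,k_{r-1})=I^A(1,k_2,\ldots,k_{r-1})-\sum I^A(1,\ldots,0,\ldots),
\]
the sum running over the insertions of $e_0$ strictly between the first and the last slot.

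First I would dispose of the length-$(r-1)$ term. The hypothesis that $(1,k_2,\ldots,k_{r-1},0)$ has odd parity is equivalent to $(k_2,\ldots,k_{r-1})$ having even parity, hence $(1,k_2,\ldots,k_{r-1})$ has even parity as well. Applying Lemma~\ref{lem:par} to it and using $I^A(1)=0$ yields
\[
I^A(1,k_2,\ldots,k_{r-1})\equiv\tfrac12\,I^A(1,k_2,\ldots,k_{r-2})\,I^A(k_{r-1})\pmod{(\mathcal{L}_{r-3}(\mathcal{E}^A\mathcal{Z}))^2},
\]
so this term lies in $(\mathcal{L}_{r-2}(\mathcal{E}^A\mathcal{Z}))^2$ and may be discarded modulo decomposables.

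The heart of the argument, and the step I expect to be hardest, is the interior sum. Each term $I^A(1,\ldots,0,\ldots)$ retains a leading $1$, so it is not admissible, and each has odd parity, so Lemma~\ref{lem:par} gives nothing. My plan is to reduce them by a secondary induction: apply the reflection relation (Proposition~\ref{prop:ref}) to trade the leading $1$ for a trailing $1$, use Lemma~\ref{lem:trans} to rewrite the trailing $1$'s as shuffles, and invoke the Fay relation (Theorem~\ref{emzv-fay})—legitimate because the reflected indices end in an entry $\neq1$ when $k_{r-1}\neq1$, the case $k_{r-1}=1$ being peeled off first by Lemma~\ref{lem:trans}. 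The coefficients produced by Fay are the numbers $c\langle\bm l\mid\bm k\rangle$, and the vanishing and reduction identities of Lemma~\ref{lem:c}—in particular the collapse of the terms with $l_2=1$ and $l_1$ even, and the factor $\delta_{0,l_r}$ arising when an index carries a $0$ in its penultimate slot—are exactly what is needed to sort the rearranged indices into admissible ones, decomposable ones, and cancelling pairs. The parity hypothesis enters precisely to ensure that the surviving non-admissible, non-decomposable contribution collapses to the single term $-I^A(0,1,k_2,\ldots,k_{r-1})$; carrying this cancellation through the $c\langle\bm l\mid\bm k\rangle$ bookkeeping is the substantive obstacle.
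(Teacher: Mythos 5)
Your decomposition is not the paper's, and the step you defer is exactly where the content of the lemma lives. The shuffle $I^A(0)\,I^A(1,k_2,\ldots,k_{r-1})=L^A(e_0\shuffle e_1e_{k_2}\cdots e_{k_{r-1}})$ does isolate the two boundary insertions, and your disposal of the length-$(r-1)$ term via Lemma \ref{lem:par} and $I^A(1)=0$ is fine. But the $r-2$ interior insertions $I^A(1,k_2,\ldots,k_j,0,k_{j+1},\ldots,k_{r-1})$ each have length $r$, a leading $1$, and the same odd parity as $\bm{k}$: they are not lower-order error terms but further instances of precisely the kind of value the lemma is about, and you give no argument that their sum lies in $\mathcal{E}^A\mathcal{Z}_{\mathrm{adm}}+(\mathcal{L}_{r-2}(\mathcal{E}^A\mathcal{Z}))^2$ --- you explicitly label this ``the substantive obstacle.'' Worse, the reduction you sketch is circular already at $r=3$. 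There the single interior term is $I^A(1,0,k_2)$ with $k_2$ odd, and reflection followed by Lemma \ref{lem:trans} gives
\[
I^A(1,0,k_2)=I^A(k_2,0,1)=-I^A(1,k_2,0)-I^A(k_2,1,0)=-I^A(1,k_2,0)-I^A(0,1,k_2),
\]
the last equality again by reflection; substituting this back into your shuffle identity makes both target terms cancel and collapses the whole relation to the already-known fact $I^A(1,k_2)=0$, yielding no information about $I^A(1,k_2,0)$ versus $I^A(0,1,k_2)$. Invoking Theorem \ref{emzv-fay} and the $c\langle\bm{l}\mid\bm{k}\rangle$ identities on each interior term is therefore not harmless bookkeeping but an unproved, and on this evidence doubtful, claim.

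The paper's proof avoids all of this: instead of shuffling a $0$ into $(1,k_2,\ldots,k_{r-1})$, it prepends a $0$ to the full index. The index $(0,1,k_2,\ldots,k_{r-1},0)$ has even parity and is admissible, so Lemma \ref{lem:par} applied to it, together with $I^A(0)=1$, gives
\[
\mathcal{E}^A\mathcal{Z}_{\mathrm{adm}}\ni I^A(0,1,k_2,\ldots,k_{r-1},0)\equiv\tfrac12\left(I^A(1,k_2,\ldots,k_{r-1},0)+I^A(0,1,k_2,\ldots,k_{r-1})\right)
\]
modulo squares of lower-length spaces, and the stated congruence falls out immediately, with no interior terms and no Fay relations. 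If you want to rescue your route you must actually prove that the sum of the interior insertions is negligible modulo $\mathcal{E}^A\mathcal{Z}_{\mathrm{adm}}+(\mathcal{L}_{r-2}(\mathcal{E}^A\mathcal{Z}))^2$, which appears to be at least as hard as the lemma itself.
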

\begin{proof}
By Lemma \ref{lem:par} and $I^A(0)=1$, we have
\begin{align*}
I^A(0,1,k_2,\ldots,k_{r-1},0)&\equiv\frac{1}{2}\left(I^A(0)I^A(1,k_2,\ldots,k_{r-1},0)+I^A(0,1,k_2,\ldots,k_{r-1})I^A(0)\right)\\
&\equiv\frac{1}{2}\left(I^A(1,k_2,\ldots,k_{r-1},0)+I^A(0,1,k_2,\ldots,k_{r-1})\right)\\&\mod(\mathcal{L}_{r-2}(\mathcal{E}^A\mathcal{Z}))^2.
\end{align*}
Therefore, we get
\begin{align*}
I^A(1,k_2,\ldots,k_{r-1},0)&\equiv2I^A(0,1,k_2,\ldots,k_{r-1},0)-I^A(0,1,k_2,\ldots,k_{r-1})\\
&\equiv-I^A(0,1,k_2,\ldots,k_{r-1})\mod {\mathcal{E}^A\mathcal{Z}}_{\mathrm{adm}}+(\mathcal{L}_{r-2}(\mathcal{E}^A\mathcal{Z}))^2. 
\end{align*}
\end{proof}
\begin{theorem}\label{main}
As a $\Q$-algebra, the following equality holds:
\[
\mathcal{E}^A\mathcal{Z}=\mathcal{E}^A\mathcal{Z}_{\mathrm{adm}}[I^A(k_1,\ldots,k_r;\tau)\mid r\in\N_0,k_1,\ldots,k_r\in\{0,1\}]. 
\]
\end{theorem}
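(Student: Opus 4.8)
The plan is to establish the only nontrivial inclusion, namely that every $I^A(\bm{k})$ with $\bm{k}=(k_1,\ldots,k_r)\in\N_0^r$ lies in the subalgebra $\mathcal{A}:=\mathcal{E}^A\mathcal{Z}_{\mathrm{adm}}[I^A(k_1,\ldots,k_r)\mid r\in\N_0,\ k_1,\ldots,k_r\in\{0,1\}]$; the inclusion $\mathcal{A}\subseteq\mathcal{E}^A\mathcal{Z}$ is immediate. I would argue by induction on the length $r$, the cases $r\le1$ being clear. Because $\mathcal{A}$ is a $\Q$-subalgebra, the induction hypothesis makes every product of eMZVs of lengths strictly less than $r$ available in $\mathcal{A}$; in particular $(\mathcal{L}_{r-2}(\mathcal{E}^A\mathcal{Z}))^2\subseteq\mathcal{A}$, and every product $I^A(k_1)\,I^A(k_2,\ldots,k_r)$ of a length-$1$ and a length-$(r-1)$ value lies in $\mathcal{A}$. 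Thus it suffices to show that each length-$r$ value is congruent, modulo products of strictly shorter eMZVs, to an element of $\mathcal{A}$.

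First I would dispose of the even-parity indices. By Lemma \ref{lem:par}, if $\bm{k}$ has even parity then $I^A(\bm{k})$ is congruent modulo $(\mathcal{L}_{r-2}(\mathcal{E}^A\mathcal{Z}))^2$ to $\frac{1}{2}(I^A(k_1)I^A(k_2,\ldots,k_r)+I^A(k_1,\ldots,k_{r-1})I^A(k_r))$, a sum of products of strictly shorter values, hence an element of $\mathcal{A}$. This leaves the odd-parity indices, and among those only the genuinely non-admissible ones --- those with $k_1=1$ or $k_r=1$ and at least one $k_i\ge2$ --- require work, since admissible indices span $\mathcal{E}^A\mathcal{Z}_{\mathrm{adm}}$ and all-$\{0,1\}$ indices are by definition generators of $\mathcal{A}$.

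For the remaining odd-parity, non-admissible indices I would run a secondary reduction within fixed length and weight. The reflection relation (Proposition \ref{prop:ref}) reverses $\bm{k}$ up to a sign and preserves parity, so it interchanges the two boundary defects $k_1=1$ and $k_r=1$. A trailing block of $1$'s is removed by Lemma \ref{lem:trans}, which rewrites such a value as a shuffle combination of values whose last entry is the preceding non-$1$ entry; applying this (and reflection) I may assume $k_r\ne1$, so that the Fay relation of Theorem \ref{emzv-fay} applies and gives $I^A(\bm{k})=-\sum_{\bm{l}}c\langle\bm{l}\mid\bm{k}\rangle I^A(\bm{l})$, a combination of same-length values whose coefficients are controlled by Lemma \ref{lem:c}. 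Finally Lemma \ref{lem:fin} trades an odd-parity value of the special shape $I^A(1,k_2,\ldots,k_{r-1},0)$ for $-I^A(0,1,k_2,\ldots,k_{r-1})$ modulo $\mathcal{E}^A\mathcal{Z}_{\mathrm{adm}}$ and shorter products, converting a leading $1$ into a leading $0$ and thereby pushing the index toward admissibility. Iterating these moves should reduce every such value into $\mathcal{A}$.

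The hard part will be the combinatorial bookkeeping that guarantees this secondary reduction terminates. Both the Fay relation and the shuffle identity preserve length and weight, so one must isolate a genuinely decreasing statistic on indices of fixed length and weight --- for instance the number of boundary $1$'s together with a lexicographic refinement --- and check, using the explicit vanishing and factorization identities for $c\langle\bm{l}\mid\bm{k}\rangle$ in Lemma \ref{lem:c}, that each $I^A(\bm{l})$ produced by Fay is admissible, all-$\{0,1\}$, a product of strictly shorter values, or strictly smaller in this statistic. Ruling out cycles --- in particular verifying that feeding the Fay output back through reflection and Lemma \ref{lem:trans} strictly lowers the chosen measure rather than merely permuting the defects between the two ends --- is the delicate point on which the argument turns.
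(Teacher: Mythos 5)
Your toolbox and overall induction scheme match the paper's, but the proposal stops exactly where the paper's proof actually begins to work, and the step you defer --- finding a decreasing statistic for the secondary reduction --- is the one genuinely nontrivial idea, so as written this is a gap, not a proof. The problem with your plan is that you apply Theorem \ref{emzv-fay} directly to $\bm{k}$, which only rewrites $I^A(\bm{k})$ as a combination of \emph{same-length, same-weight} values $I^A(\bm{l})$ with no visible progress; as you yourself note, nothing prevents cycling. The paper's maneuver is different: for odd-parity $\bm{k}=(1,k_2,\ldots,k_r)$ with $k_r\neq1$ it applies the Fay relation to the \emph{augmented} length-$(r+1)$ index $\bm{k}'=(1,k_2,\ldots,k_{r-1},0,k_r)$. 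Inserting the $0$ flips the parity, so $\bm{k}'$ has even parity, and Lemma \ref{lem:par} applies to both sides of the resulting identity. After using Lemma \ref{lem:c}(i) to collapse the sum to $-\sum_{\bm{s}}c\langle\bm{s}\mid\bm{k}\rangle I^A(\bm{s},0)$, applying Lemma \ref{lem:par} to each $I^A(\bm{s},0)$, feeding $-\tfrac{1}{2}\sum_{\bm{s}}c\langle\bm{s}\mid\bm{k}\rangle I^A(\bm{s})=\tfrac{1}{2}I^A(\bm{k})$ back through Theorem \ref{emzv-fay}, and killing the cross terms via $I^A(\mathrm{odd})=0$ together with Lemma \ref{lem:c}(ii), one arrives at the key congruence
\begin{align*}
I^A(\bm{k}) \equiv I^A(k_r)\,I^A(1,k_2,\ldots,k_{r-1},0) \mod \widetilde{\mathcal{E}^A\mathcal{Z}}_{\mathrm{adm}}.
\end{align*}
This is what your sketch is missing: it replaces the rightmost entry $k_r\notin\{0,1\}$ by $0$ at the cost of a length-one factor, and combined with Lemma \ref{lem:trans} and Lemma \ref{lem:fin} to bring the next entry outside $\{0,1\}$ to the rightmost position, it strictly decreases the number of entries of the index not lying in $\{0,1\}$. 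That count is the terminating statistic you were looking for; once it reaches zero the index is an all-$\{0,1\}$ generator of $\mathcal{A}$. Without this congruence (or an equivalent substitute) the ``iterate these moves'' step of your argument has no justification.
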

\begin{proof}
Let $\widetilde{\mathcal{E}^A\mathcal{Z}}_{\mathrm{adm}}=\mathcal{E}^A\mathcal{Z}_{\mathrm{adm}}[I^A(k_1,\ldots,k_r;\tau)\mid r\in\N_0,k_1,\ldots,k_r\in\{0,1\}]$. It is enough to show that $I^A(\bm{k})\in\widetilde{\mathcal{E}^A\mathcal{Z}}_{\mathrm{adm}}$ for any $r\geq1$ and index $\bm{k}=(k_1,\ldots,k_r)$. We will prove this by induction on the length $r$. For $r=1$, it is trivial since $I^A(1)=0$. Now, let us assume the statement holds for $r-1$ and consider $r>1$. Take an index $\bm{k}=(k_1,\ldots,k_r)\in\N_0^r$ such that $I^A(\bm{k})\notin\mathcal{E}^A\mathcal{Z}_{\mathrm{adm}}$. If $k_1\neq 1$ and $k_r=1$, we can use the reflection relation to set $k_1=1$. Further, Lemma \ref{lem:trans} allows us to assume $k_r\neq 1$. 
\par
When $\bm{k}$ has even parity, by Lemma \ref{lem:par}, we can express $I^A(\bm{k})$ as a polynomial of elliptic multiple zeta values with lengths less than or equal to $r-1$. By using the induction hypothesis, we conclude that $I^A(\bm{k})\in\widetilde{\mathcal{E}^A\mathcal{Z}}_{\mathrm{adm}}$. 
\par
Let $\bm{k}=(1,k_2,\ldots,k_r)\in\N_0^r$ be of odd parity, and assume $k_r\neq 1$. Define $\bm{k}'=(1,k_2,\ldots,k_{r-1},0,k_r)$. By Theorem \ref{emzv-fay}, we have
\begin{align*}
I^A(\bm{k}')
&=-\sum_{\bm{l}\in\N_0^{r+1}}c\langle\bm{l}\mid\bm{k}'\rangle I^A(\bm{l}).
\end{align*}
By Lemma \ref{lem:c} (i), we have $c\langle l_1,\ldots,l_r,l_{r+1}\mid\bm{k}'\rangle=\delta_{0,l_{r+1}}c\langle l_1,\ldots,l_r\mid\bm{k}\rangle$, and thus, the above expression becomes
\[
I^A(\bm{k}')
=-\sum_{\substack{\bm{s}\in\N_0^{r}}}c\langle \bm{s}\mid\bm{k}\rangle I^A(\bm{s},0). 
\]
By Lemma \ref{lem:par} and the induction hypothesis, we have
\begin{align*}
(\text{LHS}) &\equiv \frac{1}{2}I^A(k_r)I^A(1,k_2,\ldots,k_{r-1},0)\mod \widetilde{\mathcal{E}^A\mathcal{Z}}_{\mathrm{adm}}.\\
(\text{RHS}) &\equiv -\sum_{\substack{\bm{s}\in\N_0^{r}}}c\langle \bm{s}\mid\bm{k}\rangle I^A(\bm{s},0)\\
&\equiv -\sum_{\substack{\bm{s}\in\N_0^{r}}}\frac{1}{2}c\langle \bm{s}\mid\bm{k}\rangle\left(I^A(\bm{s})+I^A(s_1)I^A(s_2,\ldots,s_r,0)\right)\mod \widetilde{\mathcal{E}^A\mathcal{Z}}_{\mathrm{adm}}
\intertext{By using Theorem \ref{emzv-fay} for $-\sum_{\substack{\bm{s}\in\N_0^{r}}}\frac{1}{2}c\langle \bm{s}\mid\bm{k}\rangle I^A(\bm{s})$,}
&\equiv \frac{1}{2}I^A(\bm{k})-\sum_{\substack{\bm{s}\in\N_0^{r}}}\frac{1}{2}c\langle \bm{s}\mid\bm{k}\rangle I^A(s_1)I^A(s_2,\ldots,s_r,0)\mod \widetilde{\mathcal{E}^A\mathcal{Z}}_{\mathrm{adm}}
\intertext{For $\bm{s}=(s_1,\ldots,s_r)$, if $s_1$ is odd, then $I^A(s_1)=0$. If $s_1$ is even, By Lemma \ref{lem:c} (ii), $c\langle s_1,1,s_3,\ldots,s_r\mid 1,k_2,\ldots,k_r\rangle=0$. Therefore, we only need to consider the case where $s_1$ is even and $s_2\neq1$. In this case, only the terms $I^A(s_2,\ldots,s_r,0)\in\mathcal{E}^A\mathcal{Z}_{\mathrm{adm}}$ remain. Thus,}
&\equiv \frac{1}{2}I^A(\bm{k})\mod \widetilde{\mathcal{E}^A\mathcal{Z}}_{\mathrm{adm}}.
\end{align*}
Therefore,
\begin{align}\label{eq:key}
I^A(\bm{k}) &\equiv I^A(k_r)I^A(1,k_2,\ldots,k_{r-1},0)\mod \widetilde{\mathcal{E}^A\mathcal{Z}}_{\mathrm{adm}}. 
\end{align}
Since $I^A(\mbox{odd})=0$, we consider the case when $k_r$ is even. Then $(1,k_2,\ldots,k_{r-1},0)$ has odd parity. Let $k_n\notin\{0,1\}$ and $k_{n+1},\ldots,k_{r-1}\in\{0,1\}$. By Lemma \ref{lem:trans} and Lemma \ref{lem:fin},
we can express $I^A(1,k_2,\ldots,k_{r-1},0)$ as a linear combination of elliptic multiple zeta values whose index has $k_n$ at the rightmost position. Then, by applying the equation \eqref{eq:key} again, we can reduce it to the elliptic multiple zeta values with an index obtained by replacing $k_n\notin\{0,1\}$ with $0$. Repeating this process yields the desired result. 
\end{proof}
\centerline{Acknowledgement}
The author is deeply grateful to Professor Hidekazu Furusho for his careful reviewing of the paper and for his invaluable suggestions on its structure and details. He also sincerely thanks Professor Benjamin Enriquez for his helpful comments and encouragement. 
\bibliographystyle{amsalpha}
\bibliography{reference}
\end{document}